\newcommand{\be}{\begin{equation}}
\newcommand{\ee}{\end{equation}}
\newcommand{\bea}{\begin{eqnarray}}
\newcommand{\eea}{\end{eqnarray}}
\newtheorem{thm}{Theorem}[section]
\theoremstyle{definition}
\theoremstyle{lemma}
\newtheorem{lem}{Lemma}[section]
\theoremstyle{example}
\newtheorem{rmrk}{Remark}[section]
\theoremstyle{illustration}
\theoremstyle{proposition}
\theoremstyle{corollary}
\newtheorem{cor}{Corollary}[section]
\numberwithin{equation}{section}
\begin{document}
\date{}
\title{\textbf{Some $m$th-order Difference Sequence Spaces of Generalized Means and Compact Operators}}
\author{Amit Maji\footnote{Corresponding author, e-mail:
amit.iitm07@gmail.com}, Atanu Manna \footnote{Author's e-mail: atanumanna@maths.iitkgp.ernet.in}, P. D. Srivastava \footnote{Author's e-mail: pds@maths.iitkgp.ernet.in}\\
\textit{\small{Department of Mathematics, Indian Institute of Technology, Kharagpur}} \\
\textit{\small{Kharagpur 721 302, West Bengal, India}}}
\maketitle
\vspace{20pt}
\begin{center}\textbf{Abstract}\end{center}
In this paper, new sequence spaces $X(r, s, t ;\Delta^{(m)})$ for $X\in \{l_\infty, c, c_0\}$ defined by using generalized means and difference operator of order $m$ are introduced. It is shown that these spaces are complete normed linear spaces and the spaces $c_0(r, s, t ;\Delta^{(m)})$, $c(r, s, t ;\Delta^{(m)})$ have Schauder basis. Furthermore, the $\alpha$-, $\beta$-, $\gamma$- duals of these spaces are computed and also obtained necessary and sufficient conditions for some matrix transformations from $X(r, s, t ;\Delta^{(m)})$ to $X$. Finally, some classes of compact operators on the spaces $c_0(r, s, t ;\Delta^{(m)})$ and $l_{\infty}(r, s, t ;\Delta^{(m)})$ are characterized by using the Hausdorff measure of noncompactness. \\ \\
\textit{2010 Mathematics Subject Classification}: 46A45, 46B15, 46B50.\\
\textit{Keywords:} Difference operator; Generalized means; Matrix transformation; Hausdorff measure
of noncompactness; Compact operators.

\section{Introduction}
The study of sequence spaces has importance in the several branches of analysis, namely, the structural theory of topological vector spaces, summability theory, Schauder basis theory etc. Besides this, the theory of sequence spaces is a powerful tool for obtaining some topological and geometrical results using Schauder basis.

Let $w$ be the space of all real or complex sequences $x=(x_n)$, $n\in \mathbb{N}_0$. For an infinite matrix $A$ and a sequence space $\lambda$, the matrix domain of $A$, which is denoted by $\lambda_{A}$ and defined as $\lambda_A=\{x\in w: Ax\in \lambda\}$ \cite{WIL}. Basic methods, which are used to determine the topologies, matrix transformations and inclusion relations on sequence spaces can also be applied to study the matrix domain $\lambda_A$. In recent times, there is an approach of forming new sequence spaces by using matrix domain of a suitable matrix and characterize the matrix mappings between these sequence spaces.

Kizmaz first introduced and studied the difference sequence space in \cite{KIZ}. Later on,
several authors including Ahmad and Mursaleen \cite{AHM}, \c{C}olak
and Et \cite{COL}, Ba\c{s}ar and Altay \cite{ALT}, Polat and Ba\c{s}ar \cite{POL1}, Aydin and Ba\c{s}ar \cite{AYD} etc. have introduced and studied new sequence spaces defined by using difference operator.

On the other hand, sequence spaces are also defined by using
generalized weighted mean. Some of them can be viewed in Malkowsky
and Sava\c{s} \cite{MAL}, Altay and Ba\c{s}ar \cite{ALT1}. Mursaleen and Noman \cite{MUR} also introduced a sequence
space of generalized means, which includes most of the earlier known
sequence spaces. But till $2011$, there was
no such literature available in which a sequence space is generated by
combining both the weighted mean and the difference operator. This
was first initiated by Polat et al. \cite{POL}. Later on, Ba\c{s}arir et al. \cite{BAS} generalized the sequence spaces of Polat et al. \cite{POL} to an
$m$th-order difference sequence spaces $X(u, v; \Delta^{(m)})$ for $X \in \{ l_{\infty}, c, c_{0} \}$ which is defined as
\begin{center}
$X(u, v; \Delta^{(m)})= \Big\{x =(x_n)\in w : \big((G(u, v).\Delta^{(m)}x)_n\big) \in X \Big\}$,
\end{center}where $u, v\in w$ such that $u_n, v_n \neq 0$ for all $n$, $\Delta^{(m)} = \Delta^{(m-1)} \circ \Delta^{(1)}$ for $m \in \mathbb{N}$ and the matrices $G(u, v)=(g_{nk})$, $\Delta^{(1)}=(\delta_{nk})$ are defined by
\begin{align*}
g_{nk} &= \left\{
\begin{array}{ll}
    u_nv_k & \quad \mbox{~if~} 0\leq k \leq n,\\
    0 & \quad \mbox{~if~} k > n
\end{array}\right.&
\delta_{nk}& = \left\{
\begin{array}{ll}
    0 & \quad \mbox{~if~} 0\leq k <n-1 \\
    (-1)^{n-k} & \quad \mbox{~if~} n-1\leq k \leq n,\\
    0 & \quad \mbox{~if~} k>n.
\end{array}\right.
\end{align*}
respectively.\\

The aim of this present paper is to introduce new sequence spaces
defined by using both the generalized means and the difference
operator of order $m$. We investigate some topological properties as well as the $\alpha$-, $\beta$-, $\gamma$- duals and bases of the new
sequence spaces are obtained. We also characterize some matrix
mappings between these new sequence spaces. Finally, we give the characterization of some classes of compact operators on the spaces $c_0(r, s, t ;\Delta^{(m)})$ and $l_{\infty}(r, s, t ;\Delta^{(m)})$ by using the Hausdorff measure of noncompactness.
\section{Preliminaries}
Let $l_\infty, c$ and $c_0$ be the spaces of all bounded, convergent and null sequences $x=(x_n)$ respectively, with the norm $\|x\|_\infty=\displaystyle\sup_{n}|x_n|$. Let  $bs$ and $cs$ be the sequence spaces of all bounded and convergent series respectively. We denote by $e=(1, 1, \cdots)$ and $e_{n}$ for the sequence whose $n$-th term is $1$ and others are zero and $\mathbb{{N_{\rm 0}}}=\mathbb{N}\cup \{0\}$, where $\mathbb{N}$ is the set of all natural numbers.
A sequence $(b_n)$ in a normed linear space $(X,
\|.\|)$ is called a Schauder basis for $X$ if for every $x\in X$
there is a unique sequence of scalars $(\mu_n)$ such that
\begin{center}
$\Big\|x-\displaystyle\sum_{n=0}^{k}\mu_nb_n\Big\|\rightarrow0$ as $k\rightarrow\infty$,
\end{center}
i.e., $x=\displaystyle\sum_{n=0}^{\infty}\mu_nb_n$ \cite{WIL}.\\
For any subsets $U$ and $V$ of $w$, the multiplier space $M(U, V)$ of $U$ and $V$ is defined as
\begin{center}
$M(U, V)=\{a=(a_n)\in w : au=(a_nu_n)\in V ~\mbox{for all}~ u\in U\}$.
\end{center}
In particular,
\begin{center}
$U^\alpha= M(U, l_1)$, $U^\beta= M(U, cs)$ and $U^\gamma= M(U, bs)$
\end{center} are called the $\alpha$-, $\beta$- and $\gamma$- duals of $U$ respectively \cite{MAL1}.

Let $A=(a_{nk})_{n, k}$ be an infinite matrix
with real or complex entries $a_{nk}$. We write $A_n$ as the
sequence of the $n$-th row of $A$, i.e.,
$A_n=(a_{nk})_{k}$ for every $n$.
For $x=(x_n)\in w$, the $A$-transform of $x$ is defined as the
sequence $Ax=((Ax)_n)$, where
\begin{center}
$A_n(x)=(Ax)_n=\displaystyle\sum_{k=0}^{\infty}a_{nk}x_k$,
\end{center}
provided the series on the right side converges for each $n$. For any two sequence spaces $U$ and $V$, we denote by $(U, V)$, the class of all infinite matrices $A$ that map from $U$ into $V$. Therefore $A\in (U, V)$ if and only if $Ax=((Ax)_n)\in V$ for all $x\in U$. In other words, $A\in (U, V)$ if and only if $A_n \in U^\beta$ for all $n$ \cite{WIL}.\\ \\
The theory of $BK$ spaces is the most powerful tool in the characterization of matrix transformations between sequence spaces. A sequence space $X$ is called $BK$ space if it is a Banach space with continuous coordinates $p_n: X\rightarrow \mathbb{K}$, where $\mathbb{K}$ denotes the real or complex field and $p_n(x)=x_n$ for all $x=(x_n)\in X$ and each $n\in \mathbb{N}_{0}$.
The space $l_1$ is a $BK$ space with the usual norm defined by $\|x\|_{l_1}=\displaystyle\sum_{k=0}^{\infty}|x_k|$.
An infinite matrix $T=(t_{nk})_{n,k}$ is called a triangle if $t_{nn}\neq 0$ and $t_{nk}=0$ for all $k>n$.
Let $T$ be a triangle and $X$ be a $BK$ space. Then $X_T$ is also a $BK$ space with the norm given by $\|x\|_{X_T}= \|Tx\|_X$ for all $x\in X_T$ \cite{WIL}.

\section{Sequence spaces $X(r, s, t; \Delta^{(m)})$ for $X \in \{ l_{\infty}, c, c_{0}\}$}
In this section, we first begin with the notion of generalized means given by Mursaleen et al. \cite{MUR}.\\
We denote the sets $\mathcal{U}$ and $\mathcal{U}_{0}$ as
\begin{center}
$ \mathcal{U} = \Big \{ u =(u_{n}) \in w: u_{n} \neq
0~~ {\rm for~ all}~~ n \Big \}$ and $ \mathcal{U_{\rm 0}} = \Big \{ u
=(u_{n}) \in w: u_{0} \neq 0 \Big \}.$
\end{center}
Let $r, t \in \mathcal{U}$ and $s \in \mathcal{U}_{0}$. The sequence $y=(y_{n})$ of generalized means of a sequence $x=(x_{n})$ is defined
by $$ y_{n}= \frac{1}{r_{n}}\sum_{k=0}^{n} s_{n-k}t_{k}x_{k} \qquad (n \in \mathbb{N_{\rm 0}}).$$
The infinite matrix $A(r, s, t)$ of generalized means is defined by

$$(A(r,s,t))_{nk} = \left\{
\begin{array}{ll}
    \frac{s_{n-k}t_{k}}{r_{n}} & \quad 0\leq k \leq n,\\
    0 & \quad k > n.
\end{array}\right. $$

Since $A(r, s, t)$ is a triangle, it has a unique inverse and the
inverse is also a triangle \cite{JAR}. Take $D_{0}^{(s)} =
\frac{1}{s_{0}}$ and

$ D_{n}^{(s)} =
\frac{1}{s_{0}^{n+1}} \left|
\begin{matrix}
    s_{1} & s_{0} &  0 & 0 \cdots & 0 \\
    s_{2} & s_{1} & s_{0}& 0 \cdots & 0 \\
    \vdots & \vdots & \vdots & \vdots    \\
    s_{n-1} & s_{n-2} & s_{n-3}& s_{n-4} \cdots & s_0 \\
      s_{n} & s_{n-1} & s_{n-2}& s_{n-3} \cdots & s_1
\end{matrix} \right| \qquad \mbox{for}~ n =1, 2, 3, \cdots $\\ \\

Then the inverse of $A(r, s, t)$ is the triangle $B= (b_{nk})_{n, k}$, which is defined as
$$b_{nk} = \left\{
\begin{array}{ll}
    (-1)^{n-k}~\frac{D_{n-k}^{(s)}}{t_{n}}r_{k} & \quad 0\leq k \leq n,\\
    0 & \quad k > n.
\end{array}\right. $$
We now introduce the sequence spaces $X(r, s, t; \Delta^{(m)})$ for $X \in \{ l_{\infty}, c, c_{0} \}$ as
$$ X(r, s,t; \Delta^{(m)})= \Big \{ x=(x_{n})\in w : ( (A(r,s,t). \Delta^{(m)}) {x})_{n}) \in X \Big  \},$$
which is a combination of the generalized means and the difference operator of order $m$. By using matrix domain, we can write $X(r, s,t; \Delta^{(m)})=  X_{A(r, s,t; \Delta^{(m)})}=\{x \in w :A(r, s,t; \Delta^{(m)})x\in X\}$, where $A(r, s,t; \Delta^{(m)})= A(r, s,t). \Delta^{(m)}$, product of two triangles $A(r, s,t)$ and $\Delta^{(m)}$. The sequence $y=(y_n)$ is $A(r,s,t). \Delta^{(m)}$-transform of a sequence $x=(x_n)$, i.e.,
$$y_n = \displaystyle \frac{1}{r_n}\displaystyle\sum_{j=0}^{n} \bigg[\displaystyle\sum_{i=j}^{n}(-1)^{i-j} \binom{m}{i-j}s_{n-i} t_i\bigg]x_j.$$
These sequence spaces include
many known sequence spaces studied by several authors. For examples,

\begin{enumerate}[I.]
\item if $r_{n}=\frac{1}{u_{n}}$, $t_{n}=v_{n}$, $s_{n}=1$ $\forall n$, then the sequence spaces $ X(r, s,t; \Delta^{(m)})$ for
$X \in \{ l_{\infty}, c, c_{0} \}$ reduce to $ X(u, v; \Delta^{(m)})$ studied by Ba\c{s}arir et al.\cite{BAS} and in particular for $m=1$, the sequence spaces $ X(u, v; \Delta)$ introduced by Polat et al. \cite{POL}.
\item if $r_{n}= \frac{1}{n!}, $ $t_{n}=\frac{\alpha^n}{n!}$, $s_{n}=\frac{(1-\alpha)^n}{n!}$, where $0<\alpha<1$, then the sequence spaces $ X(r, s,t; \Delta^{(m)})$ for
$X \in \{l_\infty, c, c_0 \}$ reduce to $e^\alpha_\infty(\Delta^{(m)})$,
$e^\alpha(\Delta^{(m)})$ and $e^\alpha_0(\Delta^{(m)})$ respectively studied by Polat and Ba\c{s}ar \cite{POL1}.
\item if $r_{n}=n+1,$ $t_{n}={1+ \alpha^n}$, where $0<\alpha<1$ and $s_{n}=1~\forall n$, then the sequence spaces $ X(r, s,t; \Delta^{(m)})$ for $X \in \{c, c_0 \}$
 reduce to the spaces of sequences $a_{c}^{\alpha}(\Delta)$ and $a_{0}^{\alpha}(\Delta)$ studied by Aydin and Ba\c{s}ar \cite{AYD}. For $X=l_\infty$, the sequence space $ X(r, s,t; \Delta^{(m)})$ reduces to $a_{\infty}^{\alpha}(\Delta)$ studied by Djolovi\'{c} \cite{DJO}.
 \item if $r_n = {\lambda_n}$ $t_n = \lambda_n - \lambda_{n-1}$, $s_n =1$ and $m=1$ then the spaces $X(r, s, t; \Delta^{(m)})$ for  $X \in \{c, c_0\}$
 reduce to $c_0^{\lambda}(\Delta )$ and $c^{\lambda}(\Delta )$ respectively studied by Mursaleen and Noman \cite{MUR4}.
\end{enumerate}
\section{Main results}
In this section, we begin with some topological results of the
newly defined sequence spaces.
\begin{thm}
The sequence spaces $X(r,s, t; \Delta^{(m)})$ for $X\in \{l_\infty, c, c_0
\}$ are complete normed linear spaces under the norm defined by
\begin{center}
$\|x\|_{X(r, s, t; \Delta^{(m)})}=\displaystyle\sup_n\bigg|\frac{1}{r_n}\displaystyle\sum_{j=0}^{n} \bigg[\displaystyle\sum_{i=j}^{n}(-1)^{i-j} \binom{m}{i-j}s_{n-i} t_i\bigg]x_j\bigg|=\displaystyle\sup_n|(A(r, s, t; \Delta^{(m)})x)_n| $
\end{center}
\end{thm}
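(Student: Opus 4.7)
My strategy is to observe that $A:=A(r,s,t;\Delta^{(m)})=A(r,s,t)\,\Delta^{(m)}$ is a product of two lower triangular matrices, hence itself triangular, and that the norm displayed in the statement is precisely $\|Ax\|_\infty$. Thus $x\mapsto Ax$ acts as a linear isometry from $X(r,s,t;\Delta^{(m)})$ onto its image in $X$, and completeness of the new space becomes a transport-of-structure consequence of completeness of $(X,\|\cdot\|_\infty)$. This is exactly the general BK-domain principle recalled at the end of Section~2 (triangle matrix domain of a BK space is a BK space); I would nevertheless unpack the argument for the reader.

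First I would verify the norm axioms. Linearity of $A$, together with the norm axioms for $\|\cdot\|_\infty$, gives absolute homogeneity and the triangle inequality at once. For positive definiteness the key observation is that the diagonal entry of the product matrix $A$ is $(A)_{nn}=s_0 t_n/r_n$, which is nonzero since $s_0\neq 0$ and $r_n,t_n\neq 0$ under the hypotheses $s\in\mathcal{U}_0$ and $r,t\in\mathcal{U}$. Hence $A$ is a triangle in the strict sense, and the equation $Ax=0$ forces $x=0$ by solving recursively for $x_0,x_1,\ldots$.

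For completeness I would take a Cauchy sequence $(x^{(p)})$ in $X(r,s,t;\Delta^{(m)})$, set $y^{(p)}:=Ax^{(p)}$, and use the norm identity to conclude that $(y^{(p)})$ is Cauchy in the Banach space $(X,\|\cdot\|_\infty)$; since $l_\infty$, $c$ and $c_0$ are all complete under the sup norm, $y^{(p)}\to y$ for some $y\in X$. Because a triangle with nonzero diagonal admits a unique triangular inverse, the candidate $x:=A^{-1}y$ is a well-defined element of $w$, and it satisfies $Ax=y\in X$, so $x\in X(r,s,t;\Delta^{(m)})$. Finally $\|x^{(p)}-x\|_{X(r,s,t;\Delta^{(m)})}=\|A(x^{(p)}-x)\|_\infty=\|y^{(p)}-y\|_\infty\to 0$ gives convergence in the required norm. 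I do not anticipate any serious obstacle: the only mildly delicate point is the invocation of $A^{-1}$ to produce the candidate limit, but this is already justified by the diagonal computation above.
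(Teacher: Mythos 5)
Your proposal is correct, and it follows the same underlying idea as the paper — completeness of $X(r,s,t;\Delta^{(m)})$ is transported from $(X,\|\cdot\|_\infty)$ through the identity $\|x\|_{X(r,s,t;\Delta^{(m)})}=\|Ax\|_\infty$ — but the execution differs in one worthwhile respect. The paper argues coordinatewise: from the Cauchy condition it extracts, for each fixed $k$, a Cauchy sequence of scalars $((Ax^{i})_k)_i$, names its limit $(Ax)_k$, and then passes to the limit in $j$; it never explicitly exhibits the sequence $x$ whose $A$-transform realizes these limits, nor does it use the invertibility of the triangle at this point (membership of $x$ in the space is then patched up with a triangle-inequality estimate). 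You instead treat $y^{(p)}=Ax^{(p)}$ as a Cauchy sequence in the Banach space $X$ itself, obtain $y\in X$ from completeness of $X$ as a whole, and produce the candidate limit as $x=A^{-1}y$, which is legitimate because you check that the product of the two triangles has nonzero diagonal $s_0t_n/r_n$ (correct: $\Delta^{(m)}$ has diagonal $1$ and $A(r,s,t)$ has diagonal $s_0t_n/r_n$, with $s_0,t_n,r_n\neq 0$ by hypothesis). This buys you two things: the existence of the limit point $x$ is constructed rather than implicitly assumed, and $x\in X(r,s,t;\Delta^{(m)})$ is automatic from $Ax=y\in X$ rather than requiring a separate boundedness estimate. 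Your argument is essentially the BK-domain principle for triangles quoted at the end of Section~2, unpacked; no gaps.
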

\begin{proof}
Since $\Delta^{(m)}$ is a linear operator, it is easy to show that $X(r, s, t; \Delta^{(m)})$ is a linear space and the functional $\|.\|_{X(r, s, t; \Delta^{(m)})}$ defined above gives a norm on the linear space $X(r, s, t; \Delta^{(m)})$.\\
To show completeness, let $(x^i)$ be a Cauchy sequence in $X(r, s, t; \Delta^{(m)})$, where $x^{i}= (x_k^{i})=(x_0^{i}, x_1^{i}, x_2^{i}, \ldots )$ $\in X(r, s, t; \Delta^{(m)})$ for each $i\in \mathbb{N}_0$. Then for every $\epsilon>0$ there exists $i_0\in \mathbb{N}$ such that
\begin{center}
$\|x^{i}-x^{j}\|_{X(r, s, t; \Delta^{(m)})}<\epsilon$ ~ for $i, j\geq i_0$.
\end{center}
The above implies that for each $k\in \mathbb{N_{\rm 0}}$,
\begin{equation}
|(A(r, s, t). \Delta^{(m)})(x_k^{i}-x_k^{j})|<\epsilon  ~~\mbox{for all}~ i, j\geq i_0,
\end{equation}
Therefore the sequence $((A(r,s,t).\Delta^{(m)})x_k^{i})_i$ is a Cauchy
sequence of scalars for each $k\in \mathbb{N_{\rm 0}}$ and hence $((A(r,s,t).\Delta^{(m)})x_k^{i})_i$ converges for each $k$. We write
\begin{center}
$\displaystyle\lim_{i\rightarrow\infty}(A(r,s,t).\Delta^{(m)})x_k^{i} =
(A(r,s,t).\Delta^{(m)})x_k$ \quad for each $k \in \mathbb{N}_{0}.$
\end{center}
Letting $j\rightarrow\infty$ in $(4.1)$, we obtain
\begin{equation}
\Big|(A(r, s, t). \Delta^{(m)})(x_k^{i}-x_k)\Big|<\epsilon  ~\mbox{for
all}~ i \geq i_0 ~\mbox{and each}~ k\in \mathbb{N_{\rm
0}}.
\end{equation}
Hence by definition, $\|x^{i}-x\|_{X(r, s, t; \Delta^{(m)})}<\epsilon$
for all $i \geq i_0$. Next we show that $x \in X(r,s,t;
\Delta^{(m)})$. Since $(x^{i}) \in X(r,s, t; \Delta^{(m)})$, we have
\begin{center}
$\|x\|_{X(r, s, t; \Delta^{m})}\leq \|x^{i}\|_{X(r, s, t; \Delta^{(m)})} + \|x^{i}-x\|_{X(r, s, t; \Delta^{(m)})}$,
\end{center}
which is finite for $i \geq i_0$.
So $x\in X(r, s, t; \Delta^{(m)})$. This completes the proof.
\end{proof}

\begin{thm}
The sequence spaces $X(r,s,t; \Delta^{(m)})$ for $X \in \{ l_{\infty}, c, c_{0} \}$ are linearly isomorphic to the spaces $X \in \{ l_{\infty}, c, c_{0} \}$
respectively, i.e., $l_{\infty}(r, s, t; \Delta^{(m)}) \cong l_{\infty}$,  $c(r, s, t; \Delta^{(m)}) \cong c$ and $c_{0}(r, s, t; \Delta^{(m)}) \cong c_{0}$.
\end{thm}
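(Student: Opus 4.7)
The plan is to exhibit an explicit linear isometry $T : X(r,s,t;\Delta^{(m)}) \to X$ for each $X\in\{l_\infty,c,c_0\}$, namely the transform $T(x) = A(r,s,t;\Delta^{(m)})\,x = A(r,s,t)\cdot\Delta^{(m)}x$. This is the natural candidate: the space $X(r,s,t;\Delta^{(m)})$ was defined so that this transform lands in $X$, and Theorem~1 defined its norm to be exactly the sup-norm of the transform. So norm preservation will be automatic, and most of the work is just checking bijectivity.

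First I would verify that $T$ is well-defined, linear, and norm-preserving. Well-definedness is by definition of $X(r,s,t;\Delta^{(m)})$; linearity follows from the matrix action being linear on $w$; and $\|Tx\|_\infty = \|x\|_{X(r,s,t;\Delta^{(m)})}$ is the defining equation of the norm in Theorem~1. In particular $T$ is bounded and injective-with-closed-range once bijectivity is shown.

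For bijectivity, I would exploit triangularity. The matrix $A(r,s,t;\Delta^{(m)})=A(r,s,t)\cdot\Delta^{(m)}$ is the product of two triangles and is therefore itself a triangle, with diagonal entries equal to $t_n s_0/r_n \neq 0$ for every $n$. Hence the system $A(r,s,t;\Delta^{(m)})x=y$ can be solved uniquely by forward substitution for any $y\in w$, which gives both injectivity (take $y=0$) and surjectivity: given $y\in X$, the unique solution $x\in w$ satisfies $A(r,s,t;\Delta^{(m)})x=y\in X$, so by definition $x\in X(r,s,t;\Delta^{(m)})$ and $Tx=y$. If an explicit inverse is desired, one can write $(A(r,s,t;\Delta^{(m)}))^{-1} = (\Delta^{(m)})^{-1}\cdot B$, where $B=(b_{nk})$ is the inverse of $A(r,s,t)$ recorded in Section~3 and $(\Delta^{(m)})^{-1}$ has the closed-form entries $\binom{n-k+m-1}{m-1}$ for $0\le k\le n$; composing these yields a formula for $x$ in terms of $y$ that can be recorded for later use in the duality and matrix-mapping sections.

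There is no real obstacle beyond bookkeeping: the isometry property is built into the norm, and invertibility is forced by triangularity with nonzero diagonal. The only mild subtlety is to argue, in each of the three cases $X\in\{l_\infty,c,c_0\}$ uniformly, that $T$ maps into the correct subspace; but this is immediate because in every case $Tx\in X$ holds by definition of $X(r,s,t;\Delta^{(m)})$ and conversely every $y\in X$ is hit by $x:=(A(r,s,t;\Delta^{(m)}))^{-1}y$. The same argument therefore establishes all three isomorphisms simultaneously.
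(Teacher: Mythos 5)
Your proposal is correct and follows essentially the same route as the paper: both use the transform $T x = A(r,s,t)\cdot\Delta^{(m)}x$, observe that norm preservation is built into the definition of $\|\cdot\|_{X(r,s,t;\Delta^{(m)})}$, and obtain bijectivity from invertibility of the triangle, with the paper writing out the explicit inverse $x_n=\sum_{j=0}^{n}\sum_{k=j}^{n}(-1)^{k-j}\binom{m+n-k-1}{n-k}\frac{D_{k-j}^{(s)}}{t_k}r_j y_j$ that you mention as an optional byproduct. Your forward-substitution phrasing of surjectivity is, if anything, slightly cleaner than the paper's norm computation, but it is the same argument.
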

\begin{proof}
We prove the theorem only for the case $X=c_0$. For this, we need to show that there exists a bijective linear map from $c_0(r,s,t;\Delta^{(m)})$ to $c_0$.\\
 We define a map $T:c_0(r,s,t;\Delta^{(m)})\rightarrow c_0$ by $x\longmapsto Tx=y=(y_n)$, where
$$y_n= \frac{1}{r_n}\displaystyle\sum_{j=0}^{n} \bigg[\displaystyle\sum_{i=j}^{n}(-1)^{i-j} \binom{m}{i-j}s_{n-i} t_i\bigg]x_j.$$

Since $\Delta^{(m)}$ is a linear operator, so the linearity of $T$ is trivial. It is clear from the definition
that $Tx=0$ implies $x=0$. Thus $T$ is injective. To prove $T$ is surjective, let $y =(y_n)\in c_0$.
Since $y= (A(r, s, t). \Delta^{(m)})x$, i.e., $$x= (A(r, s, t). \Delta^{(m)})^{-1}y=({\Delta^{(m)}})^{-1}.A(r, s, t)^{-1}y.$$
So we can get a sequence $x=(x_{n})$ as
\begin{equation}
x_n = \sum_{j=0}^{n}\sum_{k=j}^{n} (-1)^{k-j} \binom{m+n-k-1}{n-k}\frac{D_{k-j}^{(s)}}{t_{k}}r_{j}y_{j},  \qquad n \in \mathbb{N_{\rm 0}}.
\end{equation}
Then
$$\|x \|_{c_0(r,s,t; \Delta^{(m)})}= \sup_{n }\bigg| \frac{1}{r_n}\displaystyle\sum_{j=0}^{n} \bigg[\displaystyle\sum_{i=j}^{n}(-1)^{i-j} \binom{m}{i-j}s_{k-i} t_i\bigg]x_j\bigg| = \sup_{n}|y_{n}|= \|y \|_{{\infty}} < \infty.$$
Thus $x \in c_0(r, s, t; \Delta^{(m)})$ and this shows that $T$ is
surjective. Hence $T$ is a linear bijection from $c_0(r,s,t;
\Delta^{(m)})$ to $c_0$. Also $T$ is norm preserving. So $c_0(r, s, t; \Delta^{(m)}) \cong c_0$.\\
Similarly, we can prove that $l_{\infty}(r, s, t; \Delta^{(m)}) \cong l_{\infty}$, $c(r, s, t; \Delta^{(m)}) \cong c$. This completes the proof.
\end{proof}

Since $X(r,s,t; \Delta^{(m)}) \cong X$ for $X \in \{c_{0}, c\}$, the Schauder bases of the sequence spaces $X(r,s,t; \Delta^{(m)})$ are the inverse image of the bases of $X$ for $X \in \{c_{0}, c\}$. So, we have the following theorem without proof.
\begin{thm}
Let $\mu_k=(A(r,s, t; \Delta^{(m)})x)_k$, $k\in \mathbb{N_{\rm
0}}$. For each $j \in \mathbb{N_{\rm 0}}$, define the sequence
$b^{(j)}=(b_{n}^{(j)})_{n}$ of the elements of
the space $c_0(r, s, t; \Delta^{(m)})$ as
$$
b_n^{(j)} = \left\{
\begin{array}{ll}
 \displaystyle\sum_{k=j}^{n} (-1)^{k-j} \binom{m+n-k-1}{n-k}\frac{D_{k-j}^{(s)}}{t_{k}}r_{j}& \mbox{if}~~   0\leq j\leq n \\
0 & \mbox{if} ~~ j>n
\end{array}
\right.
$$ and
$$
b_n^{(-1)} =\displaystyle\sum_{j=0}^{n}\displaystyle\sum_{k=j}^{n} (-1)^{k-j} \binom{m+n-k-1}{n-k}\frac{D_{k-j}^{(s)}}{t_{k}}r_{j}
.$$

Then the followings are true:\\
 $(i)$ The sequence $(b^{(j)})_{j=0}^{\infty}$ is a basis for the space $c_0(r, s,t; \Delta^{(m)})$ and any $x\in c_0(r, s,t; \Delta^{(m)})$ has a unique representation of the form
\begin{center}
$x=\displaystyle\sum_{j=0}^{\infty}\mu_jb^{(j)}$.
\end{center}
$(ii)$ The set $(b^{(j)})_{j=-1}^{\infty}$ is a basis for the space $c(r,
s,t; \Delta^{(m)})$ and any $x\in c(r, s,t; \Delta^{(m)})$ has a unique
representation of the form
\begin{center}
$x=\ell b^{(-1)}+ \displaystyle\sum_{j=0}^{\infty}(\mu_j-\ell)b^{(j)}$,
\end{center}where $\ell=\displaystyle\lim_{n\rightarrow\infty}(A(r,s, t; \Delta^{(m)})x)_n$.
\end{thm}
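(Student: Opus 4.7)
The plan is to lean on the isomorphism $T: c_{0}(r,s,t;\Delta^{(m)}) \to c_{0}$ (and its analog for $c$) established in Theorem 2, and to push the standard Schauder basis of $c_{0}$ (respectively $c$) back through $T^{-1}$. Recall from equation $(4.3)$ in the proof of Theorem 2 that the inverse of $T$ is given explicitly by
\[
(T^{-1}y)_{n} = \sum_{j=0}^{n}\sum_{k=j}^{n}(-1)^{k-j}\binom{m+n-k-1}{n-k}\frac{D_{k-j}^{(s)}}{t_{k}}r_{j}\,y_{j}.
\]
The first key observation is that the sequence $b^{(j)}$ defined in the statement is precisely $T^{-1}(e^{(j)})$: substituting $y = e^{(j)}$ into the formula collapses the outer sum to the single term with index $j$, yielding exactly the given expression for $b_{n}^{(j)}$. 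Similarly, $b^{(-1)} = T^{-1}(e)$, where $e = (1,1,\dots)$, which reproduces the double-sum formula in the statement.

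For part $(i)$, I would proceed as follows. Let $x \in c_{0}(r,s,t;\Delta^{(m)})$ and set $y = Tx$, so $y \in c_{0}$ and $y_{k}=\mu_{k}$. Using the fact that $\{e^{(j)}\}_{j=0}^{\infty}$ is the standard Schauder basis of $c_{0}$, we have
\[
\Big\| y - \sum_{j=0}^{N}\mu_{j}\,e^{(j)} \Big\|_{\infty} \longrightarrow 0 \quad \text{as } N\to\infty.
\]
Since $T$ is a linear norm-preserving bijection, applying $T^{-1}$ and using $T^{-1}(e^{(j)}) = b^{(j)}$ gives
\[
\Big\| x - \sum_{j=0}^{N}\mu_{j}\,b^{(j)} \Big\|_{c_{0}(r,s,t;\Delta^{(m)})} \longrightarrow 0.
\]
Uniqueness of the coefficients follows immediately: if $x = \sum \nu_{j} b^{(j)}$ in the norm of $c_{0}(r,s,t;\Delta^{(m)})$, then applying $T$ and using its continuity yields $y = \sum \nu_{j} e^{(j)}$, and uniqueness of expansions in $c_{0}$ forces $\nu_{j} = y_{j} = \mu_{j}$.

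For part $(ii)$, the argument is identical but built on the fact that $\{e\}\cup\{e^{(j)}\}_{j=0}^{\infty}$ is a Schauder basis of $c$: every $y\in c$ with $\ell = \lim y_{n}$ admits the unique expansion $y = \ell e + \sum_{j}(y_{j}-\ell) e^{(j)}$. Transporting this by $T^{-1}$ and noting that $\ell = \lim_{n}(A(r,s,t;\Delta^{(m)})x)_{n}$ produces the stated representation $x = \ell b^{(-1)} + \sum (\mu_{j}-\ell) b^{(j)}$, with uniqueness inherited from uniqueness in $c$.

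There is no real obstacle in this argument, because Theorem 2 has already done the heavy lifting by exhibiting a linear isometric isomorphism onto a $BK$ space with a known basis; the entire proof reduces to recognising the $b^{(j)}$ as images of the canonical basis vectors under $T^{-1}$. The only spot that requires a little care is the bookkeeping in the substitution $y = e^{(j)}$ into formula $(4.3)$ to confirm the stated closed form for $b_{n}^{(j)}$, and the analogous substitution $y = e$ for $b^{(-1)}$; both are straightforward but should be written out to verify the binomial/determinant coefficients match the statement exactly.
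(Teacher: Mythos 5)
Your proposal is correct and follows exactly the route the paper intends: the paper states this theorem without proof, remarking only that since $X(r,s,t;\Delta^{(m)})\cong X$ the bases are the inverse images under $T$ of the standard bases of $c_0$ and $c$, which is precisely the argument you carry out. Your verification that $b^{(j)}=T^{-1}(e^{(j)})$ and $b^{(-1)}=T^{-1}(e)$ via formula $(4.3)$, together with the transport of convergence and uniqueness through the isometry, supplies the details the paper omits.
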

\begin{rmrk}
In particular, if we choose $r_{n}=\frac{1}{u_{n}}$,
$t_{n}=v_{n}$, $s_{n}=1$ $\forall ~n$, then the sequence spaces $ X(r,
s,t; \Delta^{(m)})$ reduce to $ X(u, v; \Delta^{(m)})$ for $X \in \{c_{0}, c \}$. With this choice of $s_{n}$,
we have $D_{0}^{(s)} =D_{1}^{(s)}=1$ and $D_{n}^{(s)} =0 $ for $ n
\geq 2$. Then the sequences $b^{(j)} = ( b_{n}^{(j)})$ for $j=-1, 0, 1, \ldots$ reduce to
$$
b_n^{(j)} = \left\{
\begin{array}{ll}
 \displaystyle\sum_{k=j}^{j+1} (-1)^{k-j} \binom{m+n-k-1}{n-k}\frac{1}{u_{j}v_{k}}& \mbox{if}~~   0\leq j\leq n \\
0 & \mbox{if} ~~ j>n.
\end{array}
\right.
$$ and
$$
b_n^{(-1)} =\displaystyle\sum_{j=0}^{n}\displaystyle\sum_{k=j}^{j+1} (-1)^{k-j} \binom{m+n-k-1}{n-k}\frac{1}{u_{j}v_{k}}.
$$
 The sequences $( b^{(j)})_{j=0}^{\infty}$ and $(b^{(j)})_{j=-1}^{\infty}$ are the bases for the spaces $c_{0}(u,v; \Delta^{(m)})$ and $c(u,v; \Delta^{(m)})$ respectively $\cite{BAS}$.
\end{rmrk}

Let $\mathcal{F}$ be the collection of all finite nonempty subsets of the set of all natural numbers.
Let $A=(a_{nk})_{n,k}$ be an infinite matrix and consider the following conditions:\\
\begin{align}
&\sup_{K \in \mathcal{F}} \sum_{n}\Big|\sum_{k \in K}a_{nk}\Big| < \infty\\
&\displaystyle\sup_{n} \sum_{k=0}^{\infty}|a_{nk}| < \infty\\
&\displaystyle\lim_{n} \sum_{k=0}^{\infty}|a_{nk}| =0\\
&\displaystyle\lim_{n} a_{nk} =0 \mbox{~for all~} k\\
&\displaystyle\lim_{n} \sum_{k=0}^{\infty}a_{nk}=0\\
&\displaystyle\lim_{n} a_{nk} \mbox{~exists for all~} k\\
&\displaystyle\lim_{n} \sum_{k=0}^{\infty}|a_{nk}- \lim_n a_{nk}| =0.\\
&\displaystyle\lim_{n} \sum_{k=0}^{\infty}a_{nk} \mbox{~exists~}
\end{align}

We now state some results given by Stieglitz and Tietz \cite{STI} which are required to obtain the duals and matrix transformations.
\begin{thm} \cite{STI}
$(a)$ $A \in (c_0, l_1), A \in (c,l_1), A \in (l_{\infty}, l_1)$ if and only if $(4.4)$ holds.\\
$(b)$ $A \in (c_{0}, l_{\infty}), A \in (c, l_{\infty}), A \in (l_{\infty}, l_{\infty})$ if and only if $(4.5)$ holds.\\
$(c)$ $A \in (c_0, c_0)$ if and only if $(4.5)$ and $(4.7)$ hold.\\
$(d)$ $A \in (l_{\infty}, c_0)$ if and only if $(4.6)$ holds.\\
$(e)$ $A \in (c,c_0)$ if and only if $(4.5)$, $(4.7)$ and $(4.8)$ hold.\\
$(f)$ $A \in (c_0, c)$ if and only if $(4.5)$ and $(4.9)$ hold.\\
$(g)$ $A \in (l_{\infty}, c)$ if and only if $(4.5)$, $(4.9)$ and $(4.10)$ hold.\\
$(h)$ $A \in (c, c)$ if and only if $(4.5)$, $(4.9)$ and $(4.11)$ hold.
\end{thm}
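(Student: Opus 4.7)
The plan is to prove each of the eight characterizations by exploiting the fact that $A \in (U,V)$ means (i) each row $A_n$ lies in $U^{\beta}$, so $Ax$ is well-defined, and (ii) the map $x \mapsto Ax$ sends $U$ into $V$. The $\beta$-duals needed are standard: $(c_0)^{\beta} = c^{\beta} = (l_\infty)^{\beta} = l_1$, so condition (4.5) is exactly the statement that every row $A_n$ lies in the common $\beta$-dual with a uniform bound. My overall strategy is to split the arguments by the codomain $(l_1,\ l_\infty,\ c_0,\ c)$, doing sufficiency first (direct verification on test sequences) and then necessity (via Banach--Steinhaus or a gliding-hump argument).

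The easiest block is (b): for target $l_\infty$, sufficiency of (4.5) is obvious since $|(Ax)_n|\le \|x\|_\infty \sum_k |a_{nk}|$; for necessity, each coordinate functional $x\mapsto (Ax)_n$ is continuous on $U \in \{c_0,c,l_\infty\}$ with norm $\sum_k|a_{nk}|$, and Banach--Steinhaus applied to $x\mapsto Ax$ as a map into $l_\infty$ yields the uniform bound. For parts (c)--(h), one adds conditions that detect the behaviour of $(Ax)_n$ as $n\to\infty$. Testing against $e^{(k)}$ forces $\lim_n a_{nk}$ to exist (or vanish), which produces conditions (4.7) and (4.9); testing against the constant sequence $e$ (available when $c$ or $l_\infty$ lies in $U$) yields the row-sum conditions (4.8), (4.10), (4.11). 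Sufficiency in each case follows by decomposing $x\in c$ as $x = \ell e + (x - \ell e)$ with $x-\ell e \in c_0$, then approximating $x-\ell e$ in sup-norm by finitely supported sequences and using (4.5) to control tails.

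Part (a) (target $l_1$) is handled by the standard characterization via finite subsets: sufficiency of (4.4) is immediate after partitioning the index set, while necessity is the hardest core calculation—for $U=c_0$, one applies a sign-choosing test sequence on each row block and uses the closed graph or open mapping theorem to pull back a uniform bound; the extension from $(c_0,l_1)$ to $(l_\infty,l_1)$ is automatic because $c_0\subset l_\infty$ and any matrix sending $l_\infty$ into $l_1$ a fortiori sends $c_0$ into $l_1$, with equal finite-subset supremum. Part (d) (target $c_0$ from $l_\infty$) needs (4.6), i.e., $\sum_k|a_{nk}|\to 0$; this is the Schur-type statement and is where I expect the main obstacle: sufficiency is trivial, but necessity requires showing that pointwise $\lim_n a_{nk}=0$ on $l_\infty$ forces the $l_1$-norms of the rows to go to zero uniformly. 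The classical route is a gliding-hump/diagonal argument: assume $\|A_n\|_1 \not\to 0$, extract a subsequence with $\|A_{n_j}\|_1 \ge \delta$, and inductively pick index blocks $K_j$ on which the mass concentrates, then build $x\in l_\infty$ of signs $\pm1$ supported to make $(Ax)_{n_j}$ bounded away from zero, contradicting $Ax \in c_0$.

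Finally, the omitted cases (e), (f), (g), (h) are assembled by combining the already-proven ingredients: (4.5) guarantees $Ax$ lies in $l_\infty$ and controls tails, (4.7) or (4.9) handles columnwise convergence, and (4.8), (4.10), (4.11) handles the constant-sequence contribution that distinguishes convergence to $0$ from convergence to a scalar. Once these building blocks are in place, each remaining equivalence follows by the same three-step template—verify on $e^{(k)}$, verify on $e$, and extend by density plus uniform boundedness. The Schur-type necessity in (d) remains the only genuinely delicate step; everything else is Banach--Steinhaus together with bookkeeping on test sequences.
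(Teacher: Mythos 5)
The paper does not prove this theorem at all: it is quoted as a package of classical results from Stieglitz and Tietz (Silverman--Toeplitz, Kojima--Schur, Schur's theorem, and the characterization of $(l_\infty,l_1)$), so there is no in-paper argument to compare yours against; the authors only use these equivalences as black boxes in Theorems 4.5--4.7 and beyond. Your outline is the standard textbook route and its skeleton is sound: prove sufficiency over the largest admissible domain and necessity over the smallest, get the uniform row bound in (b) from Banach--Steinhaus, extract the column conditions (4.7), (4.9) and the row-sum conditions (4.8), (4.10), (4.11) by testing on $e_k$ and $e$, and isolate the two genuinely hard necessities, namely the finite-subset condition in (a) and the Schur-type gliding-hump argument in (d) and (g). Two points deserve care. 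First, in (a) the sentence claiming the passage from $(c_0,l_1)$ to $(l_\infty,l_1)$ is ``automatic because $c_0\subset l_\infty$'' only yields the trivial inclusion $(l_\infty,l_1)\subseteq(c_0,l_1)$; the substantive half is that $(4.4)$ already forces $Ax\in l_1$ for every bounded $x$, which must be verified directly (e.g.\ by splitting each row index set according to the sign pattern of $a_{nk}x_k$ and invoking the supremum over finite subsets), and your first clause should be read as doing exactly that. Second, your uniform ``template'' for sufficiency --- approximate by finitely supported sequences in sup-norm --- works for the domains $c_0$ and $c$ but not for $l_\infty$ in part (g), where finitely supported sequences are not dense; there sufficiency instead follows in one line from $(4.10)$ via $\bigl|(Ax)_n-\sum_k\alpha_kx_k\bigr|\le\|x\|_\infty\sum_k|a_{nk}-\alpha_k|$. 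With those two repairs your sketch assembles into a complete proof of the cited theorem.
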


\subsection{The $\alpha$-, $\gamma$-duals of $X(r,s, t; \Delta^{(m)})$ for $X\in\{l_\infty, c, c_0 \}$}
Now we compute the $\alpha$-, $\gamma$-duals of $X(r,s, t; \Delta^{(m)})$ for $X\in\{l_\infty, c, c_0 \}$.
\begin{thm}
The $\alpha$-dual of the space $X(r,s,t; \Delta^{(m)})$ for $X\in\{l_\infty, c, c_0\}$ is the set
$$ \Lambda = \Big \{ a=(a_{n}) \in w: \sup_{K \in \mathcal{F}} \sum_{n}\Big|\sum_{j \in K}\sum_{k=j}^{n} (-1)^{k-j} \binom{m+n-k-1}{n-k}\frac{D_{k-j}^{(s)}}{t_{k}}r_{j}a_n\Big| < \infty \Big \}.$$
\end{thm}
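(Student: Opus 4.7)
The plan is to reduce the problem to a matrix-transformation criterion via the isomorphism established in Theorem 2. Given $a = (a_n) \in w$ and $x \in X(r,s,t;\Delta^{(m)})$, I will use the explicit inverse formula (4.3) to write
$$a_n x_n \;=\; a_n \sum_{j=0}^{n}\sum_{k=j}^{n}(-1)^{k-j}\binom{m+n-k-1}{n-k}\frac{D_{k-j}^{(s)}}{t_k} r_j\, y_j \;=\; (By)_n,$$
where $y = A(r,s,t;\Delta^{(m)})x$ belongs to $X$ and $B = (b_{nj})$ is the triangle with
$$b_{nj} \;=\; \sum_{k=j}^{n}(-1)^{k-j}\binom{m+n-k-1}{n-k}\frac{D_{k-j}^{(s)}}{t_k} r_j\, a_n \qquad (0 \le j \le n).$$
Because $T \colon x \mapsto y$ is a bijection between $X(r,s,t;\Delta^{(m)})$ and $X$, the condition that $(a_n x_n)_n \in l_1$ for every $x \in X(r,s,t;\Delta^{(m)})$ is equivalent to $By \in l_1$ for every $y \in X$, i.e., to $B \in (X, l_1)$.

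Next I will invoke Theorem 5(a), which states that for any $X \in \{c_0, c, l_\infty\}$ the class $(X, l_1)$ is characterized by the single condition (4.4). Applied to the matrix $B$ above, (4.4) becomes exactly the defining inequality of the set $\Lambda$. Hence $a \in X(r,s,t;\Delta^{(m)})^{\alpha}$ if and only if $a \in \Lambda$, which gives both inclusions simultaneously and also shows that the three $\alpha$-duals coincide.

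The main technical point I expect to dwell on is the legitimacy of interchanging the order of summation needed to identify $a_n x_n$ with $(By)_n$. Since $\Delta^{(m)}$ and $A(r,s,t)$ are both triangles, their product and its inverse are triangles, so for each fixed $n$ the inner sums over $j$ and $k$ are finite and the interchange is trivial; no convergence concerns arise at the level of a single coordinate. The only nontrivial content is the verification that $(x_n) \mapsto (y_n)$ is indeed inverted by (4.3), which is guaranteed by the computation of $B = A(r,s,t)^{-1}$ recalled earlier in the paper together with the standard inverse $(\Delta^{(m)})^{-1}_{nk} = \binom{m+n-k-1}{n-k}$ for $0 \le k \le n$. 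With these identifications in place, the proof reduces to citing Theorem 5(a), and no further estimates are required.
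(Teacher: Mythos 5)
Your proposal is correct and follows essentially the same route as the paper: both write $a_nx_n=(By)_n$ via the inverse formula (4.3), observe that $a\in[X(r,s,t;\Delta^{(m)})]^{\alpha}$ if and only if the resulting triangle lies in $(X,l_1)$, and then apply the Stieglitz--Tietz characterization (condition (4.4)) to obtain the set $\Lambda$. The only difference is cosmetic (the paper calls the matrix $C$ rather than $B$), and your added remark on the harmlessness of the finite interchange of summation is a fair, if unnecessary, precaution.
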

\begin{proof}
Let $a=(a_{n}) \in w$, $x\in X (r, s, t; \Delta^{(m)})$ and $y\in X$ for $X \in \{ l_{\infty}, c, c_{0}\}$. Then for each $n \in \mathbb{N_{\rm 0}}$, we have
$$  a_{n}x_n = \sum_{j=0}^{n}\sum_{k=j}^{n} (-1)^{k-j} \binom{m+n-k-1}{n-k}\frac{D_{k-j}^{(s)}}{t_{k}}r_{j}a_ny_{j} =(Cy)_{n},$$
where the matrix $C=(c_{nj})_{n, j}$ is defined as
$$
c_{nj} = \left\{
\begin{array}{ll}
 \displaystyle\sum_{k=j}^{n} (-1)^{k-j} \binom{m+n-k-1}{n-k}\frac{D_{k-j}^{(s)}}{t_{k}}r_{j}a_n& \mbox{if}~~   0\leq j \leq n \\
0 & \mbox{if} ~~ j>n
\end{array}
\right.
$$
and $x_n$ is given by $(4.3)$.
Thus for each $x \in X(r,s,t; \Delta^{(m)})$, $(a_nx_{n})_{n} \in l_{1}$ if
and only if $(Cy)_{n} \in l_{1}$, where $y \in X $ for $X  \in
\{l_{\infty}, c, c_{0} \}$. Therefore $a=(a_{n}) \in [X(r,s,t;
\Delta^{(m)})]^{\alpha}$ if and only if $C \in (X, l_1)$. By using Theorem 4.4(a), we have
$$ [X(r,s,t; \Delta^{(m)})]^{\alpha} = \Lambda.$$
\end{proof}

\begin{thm}
The $\gamma$-dual of the space $X(r, s, t;\Delta^{(m)})$ for $X\in\{l_\infty, c, c_0\}$ is the set $$\Gamma = \Big\{a=(a_n)\in w: ~~ \displaystyle\sup_{l}\displaystyle\sum_{n=0}^{\infty}|e_{ln}|<\infty\Big\},$$
where the matrix $E=(e_{ln})$ is defined by
\begin{equation}
e_{ln}= \left\{
\begin{array}{ll}
  \displaystyle r_n \bigg[ \frac{a_{n}}{s_0t_n} + \sum_{k=n}^{n+1}(-1)^{k-n} \frac{D_{k-n}^{(s)}}{t_k} \sum_{j=n+1}^{l}\binom{m+j-k-1}{j-k}a_j+ \\
~~~~~~~~~~~~~~~~~~~~\displaystyle\sum_{k=n+2}^{l}(-1)^{k-n} \frac{D_{k-n}^{(s)}}{t_{k}}\sum_{j=k}^{l}\binom{m+j-k-1}{j-k}a_{j} \bigg] &
~0\leq n \leq l,\\
    0 & ~ n > l.
\end{array}\right.
\end{equation}
Note: We mean $\sum\limits_{j =n}^{l} =0$ if $n > l$.
\end{thm}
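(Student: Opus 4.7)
The plan is to reduce the $\gamma$-dual problem to a matrix-mapping problem and then invoke Theorem 4.4(b). By definition, $a=(a_n)\in[X(r,s,t;\Delta^{(m)})]^\gamma$ iff the partial sums $\bigl(\sum_{n=0}^{l}a_n x_n\bigr)_{l}$ are bounded in $l$ for every $x\in X(r,s,t;\Delta^{(m)})$. By Theorem 4.2 the map $x\mapsto y:=A(r,s,t;\Delta^{(m)})x$ is a norm isomorphism onto $X$, with explicit inverse given by (4.3). So it suffices to find a triangle $E=(e_{ln})$ for which $\sum_{n=0}^{l}a_n x_n = (Ey)_l$ whenever $x$ and $y$ are related by (4.3), and then to require $E\in(X,l_\infty)$.

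The first technical step is the computation of $E$. Substituting (4.3) into $\sum_{n=0}^{l}a_nx_n$ produces a triple sum in indices $n,j,k$. Interchanging the outer sum over $n$ with the middle sum over $j$, and then relabelling the free column index as $n$, yields
\be
\sum_{n=0}^{l}a_n x_n \;=\; \sum_{n=0}^{l} r_n y_n\sum_{k=n}^{l}(-1)^{k-n}\frac{D_{k-n}^{(s)}}{t_{k}}\sum_{j=k}^{l}\binom{m+j-k-1}{j-k}a_j.
\ee
The bracketed expression is the unpacked form of $e_{ln}$. To cast it in the three-piece form stated in the theorem, I would split the outer $k$-sum at $k=n$, $k=n+1$, and $k\ge n+2$. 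The isolated summand $a_n/(s_0 t_n)$ comes from the single contribution $k=j=n$, using $D_0^{(s)}=1/s_0$ and $\binom{m-1}{0}=1$; the remaining $j\ge n+1$ tail of the $k=n$ term is merged with the $k=n+1$ term to form the first inner block $\sum_{k=n}^{n+1}$ (in which the lower limit $j=n+1$ is then uniform), while $k\ge n+2$ is left as the final block.

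Once $E$ is exhibited, the conclusion is automatic. Boundedness of $((Ey)_l)_l$ for every $y\in X$ is the statement $E\in(X,l_\infty)$, and Theorem 4.4(b) says that for each $X\in\{l_\infty,c,c_0\}$ this is equivalent to condition (4.5) applied to $E$, namely $\sup_{l}\sum_{n=0}^{\infty}|e_{ln}|<\infty$. This is precisely the defining condition of $\Gamma$, giving $[X(r,s,t;\Delta^{(m)})]^\gamma=\Gamma$. The only place requiring genuine care is the triple-sum manipulation and the cosmetic splitting into the three pieces displayed in the theorem; since $E$ is a triangle every row sum is finite and each interchange of summations is justified term-by-term, so no analytic subtlety arises and the real work is purely combinatorial bookkeeping.
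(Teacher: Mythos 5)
Your proposal is correct and follows essentially the same route as the paper: substitute the inverse relation (4.3) into the partial sums, interchange the order of summation to identify the triangle $E$ of (4.12) with $\sum_{n=0}^{l}a_nx_n=(Ey)_l$, and then apply Theorem 4.4(b) to the class $(X,l_\infty)$. Your bookkeeping for the split into the three blocks (the $k=j=n$ term giving $a_n/(s_0t_n)$, the merged $k\in\{n,n+1\}$ block with uniform lower limit $j=n+1$, and the $k\ge n+2$ tail) matches the paper's computation exactly.
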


\begin{proof}
Let $a =(a_n)\in w$, $x \in X(r,s,t; \Delta^{(m)})$ and $y \in X$ for $X\in\{l_\infty, c, c_0\}$, which are connected by the relation (4.3). Then, we have

\begin{align*}
\displaystyle\sum_{n=0}^{l}a_n x_n &= \sum_{n=0}^{l}\sum_{j=0}^{n}\sum_{k=j}^{n} (-1)^{k-j} \binom{m+n-k-1}{n-k}\frac{D_{k-j}^{(s)}}{t_{k}}r_{j}a_ny_{j}\\
& =\sum_{n=0}^{l-1}\sum_{j=0}^{n}\sum_{k=j}^{n}(-1)^{k-j}\binom {m+n-k-1} {n-k}\frac{D_{k-j}^{(s)}}{t_{k}}{r_{j}y_{j}a_{n}} + \sum_{j=0}^{l}\sum_{k=j}^{l}(-1)^{k-j}\binom {m+l-k-1} {l-k}\frac{D_{k-j}^{(s)}}{t_{k}}{r_{j}y_{j}a_{l}}\\
\end{align*}
\begin{align*}
& =  \bigg[\frac{D_{0}^{(s)}}{t_{0}}a_0 + \sum_{k=0}^{1}(-1)^k \frac{D_k^{(s)}}{t_k} \sum_{j=1}^{l}\binom{m+j-k-1}{j-k}a_j + \sum_{k=2}^{l}(-1)^{k} \frac{D_{k}^{(s)}}{t_{k}} \sum_{j=k}^{l}\binom{m+j-k-1}{j-k}a_j  \bigg]r_0 y_0 \\ &  +
\bigg[\frac{D_{0}^{(s)}}{t_{1}}a_1 + \sum_{k=1}^{2}(-1)^{k-1} \frac{D_{k-1}^{(s)}}{t_k} \sum_{j=2}^{l}\binom{m+j-k-1}{j-k}a_j + \sum_{k=3}^{l}(-1)^{k-1} \frac{D_{k-1}^{(s)}}{t_{k}} \sum_{j=k}^{l} \binom{m+j-k-1}{j-k}a_j \bigg]r_1 y_1\\
& + \cdots + \frac{D_{0}^{(s)}}{t_l}a_{l}r_ly_{l}\\
& = \sum_{n=0}^{l}r_n \bigg[ \frac{a_{n}}{s_0t_n} + \sum_{k=n}^{n+1}(-1)^{k-n} \frac{D_{k-n}^{(s)}}{t_k} \sum_{j=n+1}^{l}\binom{m+j-k-1}{j-k}a_j+ \\
& ~~~~~~~~~~~~~~~~~~~~~~~~~~~~~~~~~~~~~~~~~~~\sum_{k=n+2}^{l}(-1)^{k-n} \frac{D_{k-n}^{(s)}}{t_{k}}\sum_{j=k}^{l}\binom{m+j-k-1}{j-k}a_{j} \bigg]y_{n} \\
&= (Ey)_{l},
\end{align*}
where $E$ is the matrix defined in $(4.12)$. \\Thus $a \in \big[X(r,s,t;
\Delta^{(m)})\big]^{\gamma}$ if and only if $ax=(a_nx_n)\in bs$ for
$x\in X(r,s,t; \Delta^{(m)})$ if and only if
$\Big(\displaystyle\sum_{n=0}^{l}a_n x_n \Big) \in
l_{\infty}$, i.e., $(Ey)_{l} \in l_{\infty}$ for $y\in X$. Hence by Theorem 4.4(b), we have
 $$ \big[X(r,s,t; \Delta^{(m)})\big]^{\gamma} = \Gamma.$$
\end{proof}
\begin{rmrk}
In particular, if we choose $r_{n}=\frac{1}{u_{n}}$,
$t_{n}=v_{n}$, $s_{n}=1$ $\forall ~n$, then the sequence spaces $ X(r,
s,t; \Delta^{(m)})$ for $X \in \{l_\infty, c, c_{0} \}$ reduce to $ X(u, v; \Delta^{(m)})$ \cite{BAS}. With this choice of $s_{n}$,
we have $D_{0}^{(s)} =D_{1}^{(s)}=1$ and $D_{n}^{(s)} =0 $ for $ n
\geq 2$.
Therefore the $\gamma$-dual of the space $X(u, v;\Delta^{(m)})$ for $X\in\{l_\infty, c, c_0\}$ is the set $$\Big\{a=(a_n)\in w: ~~ \displaystyle\sup_{l}\displaystyle\sum_{n=0}^{\infty}\bigg| \frac{1}{u_n} \bigg[ \frac{a_{n}}{v_n} + \sum_{k=n}^{n+1} \frac{(-1)^{k-n}}{v_k} \sum_{j=n+1}^{l}\binom{m+j-k-1}{j-k}a_j \bigg] \bigg|<\infty\Big\}.$$
\end{rmrk}
\subsection{$\beta$-dual and Matrix transformations}
Here we first discuss about the $\beta$-dual and then
characterize the matrix transformations. Let $T$ be a triangle and
$X_{T}$ be the matrix domain of $T$ in $X$.
\begin{thm}\label{1}
{(\cite{JAR}, Theorem 2.6)} Let $X $ be a BK space with AK property and
$R=S^{t}$, the transpose of $S$, where $S=(s_{jk})$ is the inverse of the matrix $T$. Then
$a \in (X_{T})^{\beta} $ if and only if
 $a \in (X^{\beta})_{R}$ and $W \in (X, c_{0})$, where the triangle $W=(w_{pk})$ is defined by $w_{pk}  = \sum\limits_{j=p}^{\infty}a_{j}s_{jk}$.
 Moreover if $a \in (X_{T})^{\beta}$, then
$$ \sum\limits_{k=0}^{\infty}a_{k}z_{k} = \sum\limits_{k=0}^{\infty} R_{k}(a)T_{k}(z)  \qquad \forall ~z \in X_{T}.$$
\end{thm}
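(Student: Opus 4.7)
The plan is to reduce the $\beta$-dual question about $X_T$ to a matrix transformation question from $X$ into $c$, exploiting the bijection $T : X_T \to X$ with inverse $S$. Fix $z \in X_T$ and set $y = Tz \in X$, so that $z = Sy$ and $T_k(z) = y_k$. Substituting $z_k = \sum_{j=0}^k s_{kj} y_j$ into the partial sum and swapping the (finite) order of summation yields
\[
\sum_{k=0}^n a_k z_k \;=\; \sum_{j=0}^n b_{nj} y_j, \qquad b_{nj} = \sum_{k=j}^n a_k s_{kj}.
\]
Hence $a \in (X_T)^\beta$ if and only if the triangle matrix $B = (b_{nj})$ belongs to $(X, c)$, and the problem is now entirely about matrix transformations acting on $X$.

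Next I would identify the pointwise column limits: $\lim_n b_{nj} = \sum_{k=j}^\infty s_{kj} a_k = R_j(a)$, since $R = S^t$. Assuming these limits exist, one has the key decomposition $b_{nj} = R_j(a) - w_{n+1,j}$ for $j \le n$, which gives
\[
\sum_{j=0}^n b_{nj} y_j \;=\; \sum_{j=0}^n R_j(a)\, y_j \;-\; \sum_{j=0}^n w_{n+1,j}\, y_j.
\]
A Fubini-type interchange, valid under the AK hypothesis, shows that $\sum_{j=0}^\infty w_{p,j} y_j = \sum_{k=p}^\infty a_k z_k$, i.e.\ $(Wy)_p$ is precisely the tail of the original series $\sum a_k z_k$; this is the bridge that links the auxiliary matrix $W$ to the $\beta$-dual question.

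Now I would invoke the AK property of $X$. Because the unit vectors $(e^{(j)})$ form a Schauder basis of $X$, a standard BK-AK characterization states that a matrix lies in $(X, c)$ if and only if its columns converge \emph{and} the remainder matrix (entries minus column limits) lies in $(X, c_0)$. Identifying the column limits with $R_j(a)$ turns the first clause into ``$Ra \in X^\beta$'', i.e.\ $a \in (X^\beta)_R$, while the remainder clause becomes $W \in (X, c_0)$ in view of the identity above. Letting $n \to \infty$ in the split identity then gives the representation $\sum_k a_k z_k = \sum_k R_k(a)\, y_k = \sum_k R_k(a)\, T_k(z)$ for every $z \in X_T$.

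The main technical obstacle is making the splitting fully rigorous: one must know the column limits $R_j(a)$ exist before writing the decomposition, and one must reconcile the finite partial sum $\sum_{j=0}^n w_{n+1,j}\, y_j$ with the a~priori infinite transform $(Wy)_{n+1}$. Both points are handled through the AK coordinate structure: testing against the basis vectors $e^{(j)} \in X$ forces the relevant columns $\sum_k s_{kj} a_k$ to converge, and the AK-controlled tail $y - \sum_{j=0}^n y_j e^{(j)} \to 0$ in $X$ legitimizes the Fubini interchange and lets one pass from the matrix $B$ back to the matrix $W$ without losing information.
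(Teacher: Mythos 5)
The paper never proves this statement---it is imported verbatim from Jarrah--Malkowsky (\cite{JAR}, Theorem 2.6)---so there is no internal proof to compare against; your argument is the standard proof of that cited result and is essentially correct: the reduction to the triangle $B=(b_{nj})\in (X,c)$ via $z=Sy$, the splitting $b_{nj}=R_j(a)-w_{n+1,j}$, and the use of AK to identify the column limits with $R(a)\in X^{\beta}$ and the remainder with $W\in (X,c_0)$ is exactly how that proof goes. The only points to tighten are that your quoted characterization of $(X,c)$ needs ``column limits form a sequence in $X^{\beta}$'' as a separate clause (which your application does supply, since $a\in (X^{\beta})_R$ packages both existence of the limits and their membership in $X^{\beta}$), and the off-by-one term $w_{n+1,n+1}\,y_{n+1}=R_{n+1}(a)\,y_{n+1}$ separating $\sum_{j=0}^{n}w_{n+1,j}y_j$ from $(Wy)_{n+1}$, which vanishes precisely because $R(a)\in X^{\beta}$ and $y\in X$.
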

\begin{rmrk}{(\cite{JAR}, Remark 2.7)}\label{r_1}
The conclusion of the Theorem $\ref{1}$ is also true for $X=l_\infty$.
\end{rmrk}

\begin{rmrk}(\cite{MAL1}, \cite{JAR}) \label{r_2}
We have $a \in (c_{T})^{\beta}$ if and only if $R(a) \in l_{1}$ and $W \in (c,c)$.
Moreover, if $a \in (c_{T})^{\beta}$ then we have for all $z \in c_{T}$
$$\sum\limits_{k=0}^{\infty}a_{k}z_{k} = \sum\limits_{k=0}^{\infty}R_{k}(a)T_{k}(z) - \eta \gamma, $$
where $\eta = \displaystyle \lim_{k\rightarrow \infty} T_{k}(z)$ and $\gamma = \displaystyle \lim_{p \rightarrow \infty} \sum\limits_{k=0}^{p}w_{pk}$.
\end{rmrk}

To find the $\beta$-duals of the sequence spaces $X(r, s, t; \Delta^{(m)})$ for $X\in \{l_{\infty}, c, c_0\}$, we define the following sets:
\begin{align*}
 &B_1 = \Big \{ a \in w : \sum\limits_{k=0}^{\infty} |R_{k}(a)| < \infty \Big \}\\
 &B_2 = \Big \{ a \in w : \lim_{p \rightarrow \infty}w_{pk} =0 ~~{\rm for~ all}~ k \Big \}\\
 &B_3 = \Big \{ a \in w : \sup_{p}\sum\limits_{k=0}^{\infty}|w_{pk}| < \infty \Big \}\\
 &B_4 = \Big \{ a \in w :  \lim_{p \rightarrow \infty} \sum\limits_{k=0}^{p}|w_{pk}|=0 \Big \}\\
 &B_5 = \Big \{ a \in w :  \lim_{p \rightarrow \infty}w_{pk} ~~ {\rm~ exists~ for~ all }~ k \Big \}\\
 &B_6 = \Big \{ a \in w :  \lim_{p \rightarrow \infty}\sum\limits_{k=0}^{p}w_{pk} {\rm~~ exists}\Big \},
\end{align*}
where $ R_{k}(a)
   =  r_{k} \bigg [ \frac{a_{k}}{s_{0}t_{k}} + \sum\limits_{i=k}^{k+1}(-1)^{i-k} \frac{D_{i-k}^{(s)}}{t_{i}} \sum\limits_{j=k+1}^{\infty}\binom{m+j-i-1} {j-i}  a_{j}  +
\sum\limits_{l=2}^{\infty} (-1)^{l} \frac{D_{l}^{(s)}}{t_{l+k}}\sum\limits_{j=k+l}^{\infty} \binom{m+j-k-l-1} {j-k-l} a_{j} \bigg ] $ and \\
$w_{pk} = r_{k} \bigg [\sum\limits_{i=k}^{p}(-1)^{i-k} \frac{D_{i-k}^{(s)}}{t_{i}}\sum\limits_{j=p}^{\infty}\binom{m+j-i-1} {j-i} a_{j} +
\sum\limits_{i=p+1}^{\infty}(-1)^{i-k} \frac{D_{i-k}^{(s)}}{t_{i}}\sum\limits_{j=i}^{\infty}\binom{m+j-i-1} {j-i}a_j \bigg ].$

\begin{thm}
We have $[c_{0}(r, s, t; \Delta^{(m)})]^\beta = B_{1} \bigcap B_{2}\bigcap B_{3}$, $[l_{\infty}(r, s, t; \Delta^{(m)})]^\beta=B_{1} \bigcap B_{4} $ and \\$[c(r, s, t; \Delta^{(m)})]^\beta=B_{1} \bigcap B_{3}\bigcap B_{5} \bigcap B_{6}$.
\end{thm}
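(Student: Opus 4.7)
The plan is to apply Theorem \ref{1} (together with Remarks \ref{r_1} and \ref{r_2}) to the triangle $T = A(r,s,t) \cdot \Delta^{(m)}$ whose inverse $S = (\Delta^{(m)})^{-1} \cdot A(r,s,t)^{-1}$ has already been computed implicitly in equation $(4.3)$. That is, the entries of $S=(s_{jk})$ are
\[
s_{jk} \;=\; r_k \sum_{i=k}^{j} (-1)^{i-k} \binom{m+j-i-1}{j-i} \frac{D_{i-k}^{(s)}}{t_i} \qquad (0 \le k \le j),
\]
and $s_{jk}=0$ otherwise. With this in hand, the first step is to verify that the rows of $R = S^{t}$ and the entries $w_{pk} = \sum_{j=p}^{\infty} a_j s_{jk}$ reduce, after swapping the order of summation in $j$ and $i$, to the explicit expressions displayed just before the theorem. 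For $R_k(a) = \sum_{j=k}^{\infty} s_{jk} a_j$ this follows by interchanging the $j$- and $i$-sums and then isolating the terms $i=k$ and $i=k+1$ (the $j=k$ contribution of the $i=k$ piece accounts for the leading $a_k/(s_0 t_k)$). For $w_{pk}$ the same interchange, followed by splitting at $i=p$, produces the two sums in the stated formula.

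Next, I treat the three cases in parallel. For $X = c_0$, since $c_0$ is a $BK$ space with $AK$ and $c_0^\beta = l_1$, Theorem \ref{1} gives $a \in [c_0(r,s,t;\Delta^{(m)})]^\beta$ if and only if $R(a) \in l_1$ and $W \in (c_0, c_0)$. The first condition is precisely $B_1$. By Theorem $4.4(c)$, the matrix condition $W \in (c_0, c_0)$ is equivalent to $(4.5)$ and $(4.7)$, which are $B_3$ and $B_2$ respectively. Hence $[c_0(r,s,t;\Delta^{(m)})]^\beta = B_1 \cap B_2 \cap B_3$.

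For $X = l_\infty$, I invoke Remark \ref{r_1} (which extends Theorem \ref{1} to $l_\infty$) together with $l_\infty^\beta = l_1$: this gives $a$ in the $\beta$-dual iff $R(a) \in l_1$ (condition $B_1$) and $W \in (l_\infty, c_0)$. By Theorem $4.4(d)$ the latter is equivalent to $(4.6)$, i.e., to $B_4$, yielding $[l_\infty(r,s,t;\Delta^{(m)})]^\beta = B_1 \cap B_4$. Finally, for $X = c$ I invoke Remark \ref{r_2}: $a$ lies in the $\beta$-dual iff $R(a) \in l_1$ (again $B_1$) and $W \in (c,c)$. By Theorem $4.4(h)$ the condition $W \in (c,c)$ is equivalent to $(4.5)$, $(4.9)$, $(4.11)$, which are $B_3$, $B_5$, $B_6$. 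This gives $[c(r,s,t;\Delta^{(m)})]^\beta = B_1 \cap B_3 \cap B_5 \cap B_6$ and completes the proof.

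The only nontrivial step is the bookkeeping in the first paragraph: one has to keep careful track of the triple sum defining $s_{jk}$, justify the Fubini-type interchange under the standing assumption that $a$ is already a candidate for the $\beta$-dual, and match the resulting closed-form expression to the $R_k(a)$ and $w_{pk}$ that appear in the sets $B_1, \ldots, B_6$. Once that identification is made, the remainder of the proof is a pure application of known matrix-class characterisations.
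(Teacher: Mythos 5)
Your proposal is correct and follows essentially the same route as the paper: compute $S=T^{-1}=(\Delta^{(m)})^{-1}A(r,s,t)^{-1}$, derive $R_k(a)$ and $w_{pk}$ by interchanging the order of summation, and then apply Theorem \ref{1} with Remarks \ref{r_1} and \ref{r_2} together with the Stieglitz--Tietz characterisations of $(c_0,c_0)$, $(l_\infty,c_0)$ and $(c,c)$ to identify the sets $B_1,\dots,B_6$. The only difference is that you spell out explicitly which conditions of Theorem 4.4 correspond to which $B_i$, which the paper leaves implicit.
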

\begin{proof}
Here the triangle $T=A(r, s, t).\Delta^{(m)}$. So $T^{-1}= (A(r, s, t). \Delta^{(m)})^{-1}=(\Delta^{(m)})^{-1}.{A(r, s, t)}^{-1}$. Let $S=(s_{jk})$ be the inverse of $T$. Then we have
\begin{displaymath}
  s_{jk}  = \left\{
     \begin{array}{ll}
        {  \sum\limits_{i=k}^{j}(-1)^{i-k} \binom{m+j-i-1} {j-i}\frac{D_{i-k}^{(s)}}{t_{i}} r_{k}}  &  \mbox{if}~~ \quad 0\leq k \leq j\\
        0              & \mbox{if}~~ \quad  k > j.
     \end{array}
   \right.
\end{displaymath}
To find the $\beta$-dual of $X(r, s, t; \Delta^{(m)})$ for $X\in \{l_{\infty}, c, c_0\}$, we need to show $R(a)=(R_k(a))\in l_1$, where $R = S^t$ and characterize the classes $W\in(c_0, c_0), W\in(l_{\infty}, c_0)$ and $W\in(c, c)$. Now
\begin{align*}
   R_{k}(a)  & = \sum\limits_{j =k}^{\infty} a_{j}s_{jk}\\
   & = \sum\limits_{j =k}^{\infty} \sum\limits_{i=k}^{j}(-1)^{i-k} \binom{m+j-i-1} {j-i}\frac{D_{i-k}^{(s)}}{t_{i}} r_{k}a_{j} \\
  & = \frac{D_{0}^{(s)}}{t_{k}} r_{k}a_k  +  \sum\limits_{j =k+1}^{\infty} \sum\limits_{i=k}^{j}(-1)^{i-k} \binom{m+j-i-1} {j-i}\frac{D_{i-k}^{(s)}}{t_{i}} r_{k}a_{j}\\
  & = \frac{D_{0}^{(s)}}{t_{k}} r_{k}a_k  +  \sum\limits_{i=k}^{k+1}(-1)^{i-k} \binom{m+k-i} {k-i+1}\frac{D_{i-k}^{(s)}}{t_{i}} r_{k}a_{k+1} + \sum\limits_{i=k}^{k+2}(-1)^{i-k}\binom{m+k-i+1} {k-i+2}\frac{D_{i-k}^{(s)}}{t_{i}} r_{k}a_{k+2} + \cdots \\
  & =   r_{k} \bigg [ \frac{a_{k}}{s_{0}t_{k}} + \sum\limits_{i=k}^{k+1}(-1)^{i-k} \frac{D_{i-k}^{(s)}}{t_{i}} \sum\limits_{j=k+1}^{\infty}\binom{m+j-i-1} {j-i}  a_{j} +
\sum\limits_{l=2}^{\infty} (-1)^{l} \frac{D_{l}^{(s)}}{t_{l+k}}\sum\limits_{j=k+l}^{\infty} \binom{m+j-k-l-1} {j-k-l} a_{j} \bigg ]
\end{align*}and
\begin{align*}
w_{pk} & = \sum\limits_{j=p}^{\infty}a_{j}s_{jk} \\
& = \sum\limits_{j=p}^{\infty}\sum\limits_{i=k}^{j}(-1)^{i-k} \binom{m+j-i-1} {j-i}\frac{D_{i-k}^{(s)}}{t_{i}} r_{k}a_j\\
& = r_{k} \bigg [\sum\limits_{i=k}^{p}(-1)^{i-k} \binom{m+p-i-1} {p-i}\frac{D_{i-k}^{(s)}}{t_{i}} a_{p} +
\sum\limits_{j=p+1}^{\infty}\sum\limits_{i=k}^{j}(-1)^{i-k} \binom{m+j-i-1} {j-i}\frac{D_{i-k}^{(s)}}{t_{i}}a_j \bigg ]\\
&= r_{k} \bigg [\sum\limits_{i=k}^{p}(-1)^{i-k} \frac{D_{i-k}^{(s)}}{t_{i}}\sum\limits_{j=p}^{\infty}\binom{m+j-i-1} {j-i} a_{j} +
\sum\limits_{i=p+1}^{\infty}(-1)^{i-k} \frac{D_{i-k}^{(s)}}{t_{i}}\sum\limits_{j=i}^{\infty}\binom{m+j-i-1} {j-i}a_j \bigg ].
\end{align*}
Using Theorem \ref{1} and Remark \ref{r_1} \& \ref{r_2}, we have
$[c_{0}(r, s, t; \Delta^{(m)})]^\beta = B_{1} \bigcap B_{2}\bigcap B_{3}$, $[l_{\infty}(r, s, t; \Delta^{(m)})]^\beta=B_{1} \bigcap B_{4} $ and $[c(r, s, t;\Delta^{(m)})]^\beta=B_{1} \bigcap B_{3}\bigcap B_{5} \bigcap B_{6}$.
\end{proof}

\begin{thm} {(\cite{JAR}, Theorem 2.13)}{\label{2}}
Let $X $ be a BK space with AK property, $Y$ be an arbitrary subset of $w$ and $R = S^{t}$, where $S=(s_{jk})$ is the inverse of the matrix $T$. Then $A \in (X_{T}, Y)$ if and only if $B^{A} \in (X, Y)$ and
$W^{A_{n}} \in (X, c_{0})$ for all $n =0, 1,2, \cdots$, where $B^A$ is the matrix with rows $B_n^A=R(A_n)$, $A_n$ are the rows of $A$ and the triangles $W^{A_n}$ for $n\in \mathbb{N}_0$ are defined by
\begin{displaymath}
  w^{A_{n}}_{pk}  = \left\{
     \begin{array}{ll}
        {  \sum\limits_{j=p}^{\infty}a_{nj}s_{jk} }& : \quad 0\leq k \leq p \\
        0              & :  \quad  k > p.
     \end{array}
   \right.
\end{displaymath}
\end{thm}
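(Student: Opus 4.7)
The plan is to reduce Theorem~\ref{2} to a row-by-row application of Theorem~\ref{1} (together with Remark~\ref{r_1} if one wants to cover $X=l_\infty$). The key observation is that the value $(Az)_n$ is determined entirely by the $n$-th row $A_n$ of $A$, viewed as an element of $w$, and that the action of $A_n$ on an arbitrary $z\in X_T$ is exactly the situation treated by Theorem~\ref{1}.

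First I would assume $A\in (X_T,Y)$. Then for every $n$ the series $\sum_k a_{nk}z_k$ must converge for every $z\in X_T$, i.e., $A_n\in (X_T)^{\beta}$. By Theorem~\ref{1} this is equivalent to $R(A_n)\in X^{\beta}$ together with $W^{A_n}\in (X,c_0)$, which already yields the second assertion of the theorem and also shows that $B^A_n=R(A_n)\in X^{\beta}$ for every $n$, so that $B^Ax$ is well defined for every $x\in X$. The transform formula in Theorem~\ref{1} then gives, for $z\in X_T$ and $x=Tz\in X$,
\[
(Az)_n \;=\; \sum_{k=0}^{\infty}a_{nk}z_k \;=\; \sum_{k=0}^{\infty}R_k(A_n)\,T_k(z) \;=\; \sum_{k=0}^{\infty}B^A_{nk}\,x_k \;=\; (B^Ax)_n.
\]
Because $T$ is a triangle, $T:X_T\to X$ is a bijection, so the correspondence $z\leftrightarrow x=Tz$ is a bijection between $X_T$ and $X$. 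Hence $Az\in Y$ for all $z\in X_T$ is equivalent to $B^Ax\in Y$ for all $x\in X$, i.e., $B^A\in (X,Y)$.

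For the converse I would assume $B^A\in (X,Y)$ and $W^{A_n}\in (X,c_0)$ for all $n$. The first assumption forces $B^A_n=R(A_n)\in X^{\beta}$ for each $n$, so by Theorem~\ref{1} applied row by row we obtain $A_n\in (X_T)^{\beta}$, which means that $(Az)_n$ is defined for every $z\in X_T$ and $n\in\mathbb{N}_0$. The displayed identity above then rewrites $Az$ as $B^Ax$ with $x=Tz\in X$, and $B^Ax\in Y$ by hypothesis, so $Az\in Y$, proving $A\in (X_T,Y)$.

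The main obstacle, or rather the step that needs the most care, is the interchange of summation underlying the identity $(Az)_n=(B^Ax)_n$: this is precisely the content that the hypothesis $W^{A_n}\in (X,c_0)$ is designed to secure, via Theorem~\ref{1}. Once that interchange is justified row by row, everything else is a bookkeeping argument using the bijectivity of the triangle map $T:X_T\to X$. The extension to $X=l_\infty$, if desired, proceeds along the same lines by invoking Remark~\ref{r_1} in place of Theorem~\ref{1}.
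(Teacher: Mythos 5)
The paper offers no proof of this statement: it is quoted verbatim from Jarrah and Malkowsky (\cite{JAR}, Theorem 2.13), so there is no in-paper argument to compare against. Your reduction is nevertheless correct and is the natural one: $A\in(X_T,Y)$ forces $A_n\in(X_T)^{\beta}$ for every $n$, Theorem~\ref{1} converts that row by row into $R(A_n)\in X^{\beta}$ plus $W^{A_n}\in(X,c_0)$ and supplies the identity $(Az)_n=\sum_k R_k(A_n)T_k(z)=(B^Ax)_n$, and the bijection $T:X_T\to X$ (with inverse $S$) turns $Az\in Y$ for all $z\in X_T$ into $B^Ax\in Y$ for all $x\in X$; the converse runs the same steps backwards, with $W^{A_n}\in(X,c_0)$ needed precisely to recover $A_n\in(X_T)^{\beta}$ from $R(A_n)\in X^{\beta}$. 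Your remark that the $l_\infty$ case requires Remark~\ref{r_1} in place of Theorem~\ref{1} (since $l_\infty$ lacks AK) is also the right caveat.
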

\begin{thm} {(\cite{JAR})}
Let $Y$ be any linear subspace of $w$. Then $A \in (c_{T}, Y)$ if and only if $R_k(A_{n}) \in (c_{0}, Y)$ and $W^{A_{n}} \in (c, c)$ for all $n$ and
$R_k(A_{n})e - (\gamma_{n})\in Y$, where $\gamma_{n} = \displaystyle \lim_{p \rightarrow \infty} \sum\limits_{k=0}^{p}w^{A_n}_{pk}$ for $n=0,1,2\cdots$.\\
Moreover, if $A \in (c_{T}, Y )$ then we have
$$ Az = R_k(A_{n})(T(z)) - \eta (\gamma_{n}) \quad  {\rm for~ all }~ z \in c_T, ~{\rm where}~ \eta = \displaystyle \lim_{k \rightarrow \infty}T_{k}(z).$$
\end{thm}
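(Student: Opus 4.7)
The plan is to reduce the theorem to Remark \ref{r_2} applied row by row to $A$. First, in order for $Az$ even to be defined for every $z\in c_{T}$ one needs $A_{n}\in(c_{T})^{\beta}$ for every $n$; by Remark \ref{r_2} this is already equivalent to $R(A_{n})\in l_{1}$ and $W^{A_{n}}\in(c,c)$ for every $n$, and in that case the identity
$$\sum_{k=0}^{\infty}a_{nk}z_{k}=\sum_{k=0}^{\infty}R_{k}(A_{n})\,T_{k}(z)-\eta\,\gamma_{n}\qquad(z\in c_{T})$$
holds, where $\eta=\lim_{k\to\infty}T_{k}(z)$ and $\gamma_{n}=\lim_{p\to\infty}\sum_{k=0}^{p}w^{A_{n}}_{pk}$. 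I would let $B^{A}$ denote, as in Theorem \ref{2}, the matrix whose $n$-th row is $R(A_{n})$.

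Next I would decompose every element of $c$ as a constant sequence plus a null sequence. Writing $T(z)=\eta e+u$ with $u=T(z)-\eta e\in c_{0}$ and substituting into the displayed identity gives
$$(Az)_{n}=\bigl(B^{A}u\bigr)_{n}+\eta\bigl(R_{k}(A_{n})\,e-\gamma_{n}\bigr),$$
which is exactly the formula asserted in the ``moreover'' part of the theorem. Because $T$ is a triangle, $z\mapsto T(z)$ is a linear bijection $c_{T}\to c$, so the pair $(\eta,u)$ ranges independently over $\mathbb{K}\times c_{0}$ as $z$ ranges over $c_{T}$. Using linearity of $Y$, the requirement $Az\in Y$ for all $z\in c_{T}$ is therefore equivalent to the two conditions obtained by separately taking $\eta=0$ with $u$ arbitrary in $c_{0}$, and $u=0$ with $\eta=1$: namely $B^{A}u\in Y$ for all $u\in c_{0}$ (i.e. $B^{A}\in(c_{0},Y)$, which is the condition $R_{k}(A_{n})\in(c_{0},Y)$ in the statement) and $\bigl(R_{k}(A_{n})\,e-\gamma_{n}\bigr)_{n}\in Y$.

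Combining these with the necessary conditions $R(A_{n})\in l_{1}$ and $W^{A_{n}}\in(c,c)$ extracted from Remark \ref{r_2} delivers the ``only if'' direction, and the same decomposition identity directly yields the ``if'' direction: under all four conditions the right-hand side lies in $Y$ for every $z\in c_{T}$. I expect the only real subtlety to be justifying the independence of $\eta$ and $u$ in the decomposition $T(z)=\eta e+u$; once one observes that $T$ being a triangle makes $z\mapsto T(z)$ a bijection $c_{T}\to c$ so that $(\eta,u)$ fills out $\mathbb{K}\times c_{0}$, the linearity of $Y$ forces the two summands in the identity to belong to $Y$ separately and the remainder of the proof is bookkeeping together with a direct appeal to Remark \ref{r_2}.
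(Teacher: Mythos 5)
The paper does not prove this theorem at all: it is quoted verbatim from Jarrah--Malkowsky \cite{JAR} and used as a black box, so there is no in-paper argument to compare yours against. That said, your reconstruction is correct and is essentially the standard derivation of this result from the $\beta$-duality statement. The two pillars are exactly right: (i) $A\in(c_T,Y)$ forces $A_n\in(c_T)^{\beta}$ for every $n$, which by Remark \ref{r_2} is equivalent to $R(A_n)\in l_1$ and $W^{A_n}\in(c,c)$ and yields the row identity $(Az)_n=\sum_k R_k(A_n)T_k(z)-\eta\gamma_n$ (this is already the ``moreover'' formula); and (ii) the splitting $T(z)=\eta e+u$ with $u\in c_0$, which converts that identity into $(Az)_n=(B^Au)_n+\eta\bigl(R(A_n)e-\gamma_n\bigr)$ and, since $T$ is a triangle so that $(\eta,u)$ sweeps out all of $\mathbb{K}\times c_0$, separates the membership condition $Az\in Y$ into $B^A\in(c_0,Y)$ plus $\bigl(R(A_n)e-\gamma_n\bigr)_n\in Y$. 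Two small points you leave implicit but should record: the quantity $R(A_n)e=\sum_k R_k(A_n)$ converges because $R(A_n)\in l_1$ (needed before you may distribute the sum over $\eta e+u$), and the limits $\gamma_n$ exist precisely because $W^{A_n}\in(c,c)$ entails condition $(4.11)$ for the triangle $W^{A_n}$. Note also that linearity of $Y$ is used only in the ``if'' direction (to recombine the two summands); the ``only if'' direction needs only the two special choices $\eta=0$ and $(\eta,u)=(1,0)$. With those remarks added, your proof is complete.
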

 To characterize the matrix transformations $A\in (X(r, s,t; \Delta^{(m)}), Y)$ for $X, Y\in \{l_{\infty}, c, c_0\}$, we list the following conditions:\\
 \begin{align}
 &\displaystyle\sup_{n}\sum\limits_{k=0}^{\infty}|R_{k}(A_{n})| < \infty\\
 &\displaystyle\lim_{n \rightarrow \infty } R_{k}(A_{n}) =0 \quad  \mbox{~for all~} k\\
&\displaystyle\sup_{p} \sum\limits_{k=0}^{p}|w_{pk}^{A_{n}}| < \infty \quad \mbox{~for all~} n \\
&\displaystyle\lim_{p \rightarrow \infty}w_{pk}^{A_{n}} =0 \mbox{~for all~} n \\
&\displaystyle\lim_{n \rightarrow \infty } R_{k}(A_{n})  \mbox{~exists for all~} k\\
&\displaystyle\lim_{n \rightarrow \infty}\sum\limits_{k=0}^{\infty}|R_{k}(A_{n})| = 0
\end{align}
\begin{align}
&\displaystyle\lim_{p \rightarrow \infty} \sum\limits_{k=0}^{p}|w_{pk}^{A_{n}}| =0  \quad \mbox{~for all~} n\\
&\displaystyle\lim_{n \rightarrow \infty } \sum_{k=0}^{\infty}\Big|R_{k}(A_{n}) - \lim_{n \rightarrow \infty }R_{k}(A_{n})\Big|=0\\
&\displaystyle\lim_{p \rightarrow \infty}w_{pk}^{A_{n}} \mbox{~exists for all~} k, n \\
&\displaystyle\lim_{p \rightarrow \infty} \sum\limits_{k=0}^{p} w_{pk}^{A_{n}} \mbox{~exists for all~} n\\
 &R_{k}(A_{n})e - (\gamma_{n}) \in c_{0}  \quad  \mbox{~for all~}  \gamma_{n}, ~n=0, 1, 2, \cdots\\
 &R_{k}(A_{n})e - (\gamma_{n}) \in l_{\infty} \quad  \mbox{~for all~} \gamma_{n},~ n=0, 1, 2, \cdots\\
 &R_{k}(A_{n})e - (\gamma_{n}) \in c  \quad  \mbox{~for all~}  \gamma_{n},~ n=0, 1, 2, \cdots,
 \end{align}
 where $\gamma_{n}= \displaystyle\lim_{p \rightarrow \infty} \sum\limits_{k=0}^{p} w_{pk}^{A_{n}}$,\\
 $R_{k}(A_{n}) = r_{k} \bigg [ \frac{a_{nk}}{s_{0}t_{k}} + \sum\limits_{i=k}^{k+1}(-1)^{i-k} \frac{D_{i-k}^{(s)}}{t_{i}} \sum\limits_{j=k+1}^{\infty}\binom{m+j-i-1} {j-i}  a_{nj} +
\sum\limits_{l=2}^{\infty} (-1)^{l} \frac{D_{l}^{(s)}}{t_{l+k}}\sum\limits_{j=k+l}^{\infty} \binom{m+j-k-l-1}{j-k-l} a_{nj} \bigg ]$
and\\
$w^{A_{n}}_{pk} =r_{k} \bigg [\sum\limits_{i=k}^{p}(-1)^{i-k} \frac{D_{i-k}^{(s)}}{t_{i}}\sum\limits_{j=p}^{\infty}\binom{m+j-i-1} {j-i} a_{nj} +
\sum\limits_{i=p+1}^{\infty}(-1)^{i-k} \frac{D_{i-k}^{(s)}}{t_{i}}\sum\limits_{j=i}^{\infty}\binom{m+j-i-1}{j-i} a_{nj} \bigg ].$

\begin{thm}
(a)  $A \in (c_{0}(r, s,t; \Delta^{(m)}), c_{0})$ if and only if $ (4.13), (4.14), (4.15)$ and $(4.16)$ hold.\\
(b) $A \in (c_{0}(r, s,t; \Delta^{(m)}), c)$ if and only if  $(4.13), (4.15), (4.16)$ and $(4.17)$hold.\\
(c) $A \in (c_{0}(r, s,t;\Delta^{(m)}), l_{\infty})$ if and only if $(4.13), (4.15)$ and $(4.16)$ hold.
\end{thm}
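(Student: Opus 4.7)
The plan is to apply Theorem \ref{2} with the BK-AK space $X=c_0$ and the triangle $T = A(r,s,t).\Delta^{(m)}$, so that $X_T = c_0(r,s,t;\Delta^{(m)})$. The theorem then reduces $A \in (c_0(r,s,t;\Delta^{(m)}),\,Y)$ to the two conditions $B^A \in (c_0,Y)$ and $W^{A_n} \in (c_0,c_0)$ for every $n \in \mathbb{N}_0$. Here $B^A$ is the matrix whose $n$-th row is $R(A_n)$, and the rows of the triangles $W^{A_n}$ are computed from the inverse $S=(s_{jk})$ of $T$ exactly as in the statement of Theorem \ref{2}. The explicit form of $R_k(A_n)$ and $w^{A_n}_{pk}$ displayed just before the theorem is obtained from the same computation used in Theorem 4.7 (with $a_j$ replaced throughout by $a_{nj}$), so we may take these expressions as given.

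With this reduction in place, each of the three parts is a matter of plugging the correct Stieglitz--Tietz criterion from Theorem 4.4 into the ``$B^A \in (c_0,Y)$'' clause, while the ``$W^{A_n} \in (c_0,c_0)$'' clause is handled uniformly. First, by Theorem 4.4(c), $W^{A_n} \in (c_0,c_0)$ for every $n$ is equivalent to the uniform-in-$p$ row-sum bound and the columnwise null condition for $W^{A_n}$, which are precisely $(4.15)$ and $(4.16)$. Thus these two conditions appear in all three parts.

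For part (a), $B^A \in (c_0,c_0)$ is characterized, again by Theorem 4.4(c), by $\sup_n \sum_k |R_k(A_n)| < \infty$ and $\lim_n R_k(A_n) = 0$ for each $k$; these are exactly $(4.13)$ and $(4.14)$. For part (c), $B^A \in (c_0,l_\infty)$ is characterized by Theorem 4.4(b) as $\sup_n \sum_k |R_k(A_n)| < \infty$, namely $(4.13)$ alone. For part (b), $B^A \in (c_0,c)$ is characterized by Theorem 4.4(f) via the same row-sum bound together with the existence of $\lim_n R_k(A_n)$ for each $k$, giving $(4.13)$ and $(4.17)$. Combining in each case with $(4.15)$--$(4.16)$ yields the stated equivalences.

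There is no real obstacle: the substantive work (the AK reduction, the explicit formulas for $R_k(\cdot)$ and $w^{A_n}_{pk}$, and the Stieglitz--Tietz criteria) is already in place. The only thing to check carefully is bookkeeping: that the indices and the role of $n$ versus $p$ in $W^{A_n}$ are correctly lined up with the hypotheses of Theorem 4.4(b),(c),(f) when those results are applied to the matrices $B^A$ and $W^{A_n}$, and that the explicit forms of $R_k(A_n)$ and $w^{A_n}_{pk}$ match the expressions displayed in $(4.13)$--$(4.22)$.
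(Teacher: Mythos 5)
Your proposal is correct and follows essentially the same route as the paper: both apply Theorem \ref{2} with $X=c_0$ and $T=A(r,s,t).\Delta^{(m)}$, compute the matrices $B^A=(R_k(A_n))$ and $W^{A_n}=(w^{A_n}_{pk})$ as in Theorem 4.7, and then match the clauses $B^A\in(c_0,Y)$ and $W^{A_n}\in(c_0,c_0)$ against the Stieglitz--Tietz conditions of Theorem 4.4. The only difference is that you make explicit which parts of Theorem 4.4 are invoked for each target space, which the paper leaves implicit.
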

\begin{proof}We only prove the part (a) of this theorem. The other parts follow in a similar way. We first compute the matrices $B^A= (R_{k}(A_{n}))$ and $W^{A_{n}} =(w_{pk}^{A_{n}})$ for $n =0,1,2, \cdots$ of Theorem $\ref{2}$ to determine the conditions $B^A\in (c_0, c_0)$ and $W^{A_{n}}\in (c_0, c_0)$.
Using the same lines of proof as used in Theorem 4.8, we have
\begin{align*}
R_{k}(A_{n}) &= \sum\limits_{j=k}^{\infty}s_{jk}a_{nj}\\
& = \frac{D_{0}^{(s)}}{t_{k}} r_{k}a_{nk}  +  \sum\limits_{j =k+1}^{\infty} \sum\limits_{i=k}^{j}(-1)^{i-k} \binom{m+j-i-1} {j-i}\frac{D_{i-k}^{(s)}}{t_{i}} r_{k}a_{nj}\\
  & =   r_{k} \bigg [ \frac{a_{nk}}{s_{0}t_{k}} + \sum\limits_{i=k}^{k+1}(-1)^{i-k} \frac{D_{i-k}^{(s)}}{t_{i}} \sum\limits_{j=k+1}^{\infty}\binom{m+j-i-1} {j-i}  a_{nj} + \\
& ~~~~~~~~~~~~~~~~~~~~~~~~~~~~~~~~~~~~~~~~~\sum\limits_{l=2}^{\infty} (-1)^{l} \frac{D_{l}^{(s)}}{t_{l+k}}\sum\limits_{j=k+l}^{\infty} \binom{m+j-k-l-1} {j-k-l} a_{nj} \bigg ]
\end{align*}
and
\begin{align*}
w^{A_{n}}_{pk} &= \sum\limits_{j=p}^{\infty}s_{jk}a_{nj}\\
& = r_{k} \bigg [\sum\limits_{i=k}^{p}(-1)^{i-k} \frac{D_{i-k}^{(s)}}{t_{i}}\sum\limits_{j=p}^{\infty}\binom{m+j-i-1} {j-i} a_{nj} +
\sum\limits_{i=p+1}^{\infty}(-1)^{i-k} \frac{D_{i-k}^{(s)}}{t_{i}}\sum\limits_{j=i}^{\infty}\binom{m+j-i-1} {j-i}a_{nj} \bigg ].
\end{align*}
Using Theorem \ref{2}, we have $A\in (c_{0}(r, s,t; \Delta^{(m)}), c_{0})$ if and only if the conditions $(4.13), (4.14), (4.15)$ and $(4.16)$ hold.
\end{proof}

We can also obtain the following results.
\begin{cor}
(a)  $A \in (l_{\infty}(r, s,t; \Delta^{(m)}), c_{0})$ if and only if the conditions $(4.18)$ and $(4.19)$ hold.\\
(b) $A \in (l_{\infty}(r, s,t; \Delta^{(m)}), c)$ if and only if the conditions $(4.13), (4.17), (4.19)$ and $(4.20)$ hold.\\
(c) $A \in (l_{\infty}(r, s,t; \Delta^{(m)}), l_{\infty})$ if and only if the conditions $(4.13)$ and $(4.19)$ hold.
\end{cor}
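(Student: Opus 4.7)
The plan is to mirror the proof of Theorem 4.10 (the $c_0(r,s,t;\Delta^{(m)})$ case) by invoking Theorem \ref{2} with $X=l_{\infty}$, which is legitimate by Remark \ref{r_1}. Since the triangle $T=A(r,s,t)\cdot\Delta^{(m)}$ and its inverse $S=(s_{jk})$ do not depend on the domain or codomain, the computation of $B^A=(R_k(A_n))$ and of the triangles $W^{A_n}=(w^{A_n}_{pk})$ carries over verbatim from the proof of Theorem 4.10. Thus the problem reduces to determining when $B^A\in (l_{\infty},Y)$ and $W^{A_n}\in (l_{\infty},c_0)$ for all $n$, where $Y\in\{c_0,c,l_{\infty}\}$ respectively.

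For part (a), I would apply Theorem 4.4(d): $B^A\in (l_{\infty},c_0)$ is equivalent to $\lim_n\sum_k|R_k(A_n)|=0$, which is precisely $(4.18)$. For the second requirement $W^{A_n}\in (l_{\infty},c_0)$ for every $n$, Theorem 4.4(d) gives the row condition $\lim_{p}\sum_{k=0}^{p}|w^{A_n}_{pk}|=0$, which is $(4.19)$. Together these yield the announced characterization.

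For part (b), Theorem \ref{2} requires $B^A\in (l_{\infty},c)$ and $W^{A_n}\in (l_{\infty},c_0)$. The first is controlled by Theorem 4.4(g), producing conditions $(4.5),(4.9),(4.10)$ which, on the matrix $B^A$, translate into $(4.13)$ (boundedness of row sums of $|R_k(A_n)|$), $(4.17)$ (existence of $\lim_n R_k(A_n)$ for each $k$), and $(4.20)$ (the uniform convergence condition $\lim_n\sum_k|R_k(A_n)-\lim_n R_k(A_n)|=0$). The second condition is again $(4.19)$ as in part (a). For part (c), Theorem 4.4(b) collapses $B^A\in (l_{\infty},l_{\infty})$ to the single condition $(4.13)$, while $W^{A_n}\in (l_{\infty},c_0)$ remains $(4.19)$.

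The proof is therefore essentially bookkeeping: the analytic content lives in Theorem \ref{2} and Remark \ref{r_1}, and the only step requiring attention is the correct identification of the Stieglitz--Tietz conditions for matrix classes with domain $l_{\infty}$ (rather than $c_0$) in Theorem 4.4, so that the right boundedness/convergence hypotheses are placed on $B^A$ in each of the three cases. No new calculation of $R_k(A_n)$ or $w^{A_n}_{pk}$ is needed beyond quoting the formulas established in the proof of Theorem 4.10.
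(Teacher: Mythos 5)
Your proposal is correct and is exactly the argument the paper intends: the corollary is stated without proof precisely because, as you say, the computations of $R_k(A_n)$ and $w^{A_n}_{pk}$ from Theorem 4.10 carry over verbatim, and one only swaps in the Stieglitz--Tietz conditions of Theorem 4.4(b),(d),(g) for domain $l_{\infty}$, which you match correctly to $(4.13)$, $(4.17)$--$(4.20)$ and $(4.19)$ in each part. The only cosmetic caveat is that Remark \ref{r_1} as stated extends Theorem \ref{1} (not Theorem \ref{2}) to $X=l_{\infty}$, so strictly you are invoking the analogous extension of Theorem \ref{2}, which the paper itself uses implicitly here.
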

\begin{cor}
(a)  $A \in (c(r, s,t; \Delta^{(m)}), c_{0})$ if and only if the conditions $(4.13), (4.14), (4.15), (4.21), (4.22)$ and $(4.23)$ hold.\\
(b) $A \in (c(r, s,t; \Delta^{(m)}), c)$ if and only if the conditions  $(4.13), (4.15), (4.17), (4.21), (4.22)$ and $(4.25)$ hold.\\
(c) $A \in (c(r, s,t; \Delta^{(m)}), l_{\infty})$ if and only if the conditions  $(4.13), (4.15), (4.21), (4.22)$ and $(4.24)$ hold.
\end{cor}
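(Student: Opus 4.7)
The plan is to mirror the proof style used for Theorem 4.12 and the previous corollaries, with the base domain $X_T = c_T$ in place of $(c_0)_T$ or $(l_\infty)_T$. Since $c$ does not have AK, one uses the companion theorem for $(c_T, Y)$ (the result stated just before the matrix transformation conditions (4.13)--(4.25)), which says that $A \in (c_T, Y)$ iff the associated matrix $B^A = (R_k(A_n))_{n,k}$ satisfies $B^A \in (c_0, Y)$, each companion triangle $W^{A_n}$ satisfies $W^{A_n} \in (c, c)$, and the ``correction'' sequence $\bigl(R_k(A_n)e - \gamma_n\bigr)$ lies in $Y$, where $\gamma_n = \lim_{p\to\infty}\sum_{k=0}^p w_{pk}^{A_n}$. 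Here $T = A(r,s,t).\Delta^{(m)}$ and the explicit formulas for $R_k(A_n)$ and $w_{pk}^{A_n}$ have already been computed in Theorem 4.12 using $T^{-1} = (\Delta^{(m)})^{-1}.A(r,s,t)^{-1}$; they are simply the formulas listed just after the conditions (4.13)--(4.25) and can be reused verbatim.

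Given those formulas, the three parts amount to reading off the matrix characterizations of Theorem 4.4 for the maps $B^A$ and $W^{A_n}$. For part (a), $Y = c_0$, so $B^A \in (c_0, c_0)$ by Theorem 4.4(c) is equivalent to $B^A$ satisfying the analogs of (4.5) and (4.7), namely our conditions (4.13) and (4.14); the requirement $W^{A_n} \in (c,c)$ by Theorem 4.4(h) yields (4.15) (the analog of (4.5)), (4.21) (the analog of (4.9)) and (4.22) (the analog of (4.11)); finally the correction condition $\bigl(R_k(A_n)e - \gamma_n\bigr) \in c_0$ is precisely (4.23). For part (b), $Y = c$, so by Theorem 4.4(f) we replace (4.14) by (4.17) (keeping (4.13)); the $W^{A_n} \in (c,c)$ conditions stay as (4.15), (4.21), (4.22); and the correction condition becomes (4.25). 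For part (c), $Y = l_\infty$, so by Theorem 4.4(b) the $B^A$-conditions reduce to (4.13) alone; the $W^{A_n}$ conditions are (4.15), (4.21), (4.22); and the correction condition becomes (4.24).

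In each case I would write one or two lines explicitly assembling the three collections of conditions, cite Theorem 4.12 (or the proof of Theorem 4.8) to justify reusing the closed forms for $R_k(A_n)$ and $w_{pk}^{A_n}$, and invoke the $(c_T, Y)$ theorem together with the appropriate sub-case of Theorem 4.4. No new calculation is required beyond what was done for $c_0(r,s,t;\Delta^{(m)})$ and $l_\infty(r,s,t;\Delta^{(m)})$; the only genuine change is the appearance of the $\gamma_n$-correction term, which is a direct consequence of the fact that $c$ is a BK-space without AK, so the evaluation functional on $c$ splits as $\lim_k T_k(z)$ plus the AK-part.

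The main obstacle, such as it is, lies purely in bookkeeping: matching up the abstract conditions (4.5), (4.7), (4.9), (4.11) from Theorem 4.4 with the concrete numbered conditions (4.13)--(4.25) expressed in terms of $R_k(A_n)$ and $w_{pk}^{A_n}$, and making sure the indices $(n$ vs.\ $p$, $k$ vs.\ $j)$ in the ``$\sup$'' and ``$\lim$'' statements line up with whether one is viewing $B^A$ as a matrix with row index $n$ or $W^{A_n}$ as a family of matrices indexed by $n$ with row index $p$. Once this dictionary is fixed at the start of the proof, the three parts collapse to one-line verifications each.
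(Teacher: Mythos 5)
Your proposal is correct and is exactly the argument the paper intends (the paper states this corollary without proof, as a direct consequence of the $(c_T,Y)$ theorem quoted from Jarrah--Malkowsky together with Theorem 4.4 and the formulas for $R_k(A_n)$ and $w^{A_n}_{pk}$ already computed in the proof of Theorem 4.12). Your condition-matching in all three parts --- $(4.13),(4.14)$ from $B^A\in(c_0,c_0)$, $(4.15),(4.21),(4.22)$ from $W^{A_n}\in(c,c)$, and the $\gamma_n$-correction conditions $(4.23)$--$(4.25)$ according to the target space --- agrees with the statement.
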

\section{Compact operators on the spaces $X(r, s,t; \Delta^{(m)})$ for $X  \in \{ c_0, l_{\infty} \}$}
In this section, we apply the Hausdorff measure of noncompactness to establish necessary and sufficient conditions for an infinite matrix to be a compact operator from the space $X(r, s,t; \Delta^{(m)})$ to $X$ for $X  \in \{ c_0, l_{\infty} \}$.
\par As the matrix transformations between $BK$ spaces are continuous, it is quite natural to find necessary and sufficient conditions for a matrix mapping between $BK$ spaces to be a compact operator. This can be achieved with the help of Hausdorff measure of noncompactness.
Recently several authors, namely, Malkowsky and Rako\v{c}evi\'{c} \cite{MAL5}, Dojolovi\'{c} et al. \cite{DJO2}, Dojolovi\'{c} \cite{DJO},  Mursaleen and Noman (\cite{MUR2}, \cite{MUR3}), Ba\c{s}arir and Kara \cite{BAS}
 etc. have established some identities or estimates for the operator norms and the Hausdorff measure of noncompactness of matrix operators from an
 arbitrary $BK$ space to arbitrary $BK$ space. Let us recall some definitions and well-known results.

\par Let $X$, $Y$ be two Banach spaces and $S_X$ denotes the unit sphere in $X$, i.e., $S_X=\{x\in X: \|x\|=1\}$.
 We denote by $\mathcal{B}(X, Y)$, the set of all bounded (continuous) linear operators $L: X\rightarrow Y$, which is a Banach space with the operator norm $\|L\|=\displaystyle\sup_{x\in S_X}\|L(x)\|_Y$ for all $L\in \mathcal{B}(X, Y)$. A linear operator $L: X\rightarrow Y$ is said to be compact if the domain of $L$ is all of $X$ and for every bounded sequence $(x_n)\in X$, the sequence $(L(x_n))$ has a subsequence which is convergent in $Y$ and we denote by $\mathcal{C}(X, Y)$, the class of all compact operators in $\mathcal{B}(X, Y)$. An operator $L\in \mathcal{B}(X, Y)$ is said to be finite rank if ${\rm dim}R(L)<\infty$, where $R(L)$ is the range space of $L$.
If $X$ is a $BK$ space and $a=(a_k)\in w$, then we consider
\begin{equation}{\label{eq0}}
\|a\|_X^*=\displaystyle\sup_{x\in S_X}\Big|\displaystyle\sum_{k=0}^{\infty}a_kx_k\Big|,
\end{equation}
provided the expression on the right side exists and is finite which is the case whenever $a\in X^\beta$ \cite{MUR3}.\\
Let $(X,d )$ be a metric space and $\mathcal{M}_{X}$ be the class of all bounded subsets of $X$.
Let $B(x, r) = \{y \in X : d(x,y) < r \}$ denotes the open ball of radius $r> 0$ with centre at $x$.
The Hausdorff measure of noncompactness of a set $Q \in \mathcal{M}_{X}$, denoted by $\chi(Q)$, is defined as
$$ \displaystyle\chi(Q) = \inf \Big \{ \epsilon > 0: Q \subset \bigcup_{i=0}^{n}B(x_i, r_i), x_i \in X, r_i < \epsilon , n \in \mathbb{N}_0\Big\}.$$
The function $\chi: \mathcal{M}_{X} \rightarrow [0, \infty) $ is called the Hausdorff measure of noncompactness. The basic properties of the Hausdorff measure of noncompactness can be found in (\cite{DJO1}, \cite{MAL5}, \cite{DJO2}, \cite{MAL3}, \cite{MAL4}).
For example, if $Q, Q_1$ and $Q_2$ are bounded subsets of a metric space $(X,d)$ then
\begin{align*}
&\chi(Q) =0 \mbox{~if and only if ~} Q \mbox{~is totally bounded} \mbox{~and}\\
&\mbox{if~} Q_1 \subset Q_2 \mbox{~then~} \chi(Q_1) \leq \chi(Q_2).
\end{align*}
Also if $X$ is a normed space, the function $\chi$ has some additional properties due to linear structure, namely,
\begin{align*}
&\chi(Q_1 + Q_2) \leq  \chi(Q_1) + \chi( Q_2),\\
& \chi( \alpha Q) = |\alpha| \chi(Q) ~ \mbox{for all }~ \alpha \in \mathbb{K}.
\end{align*}
Let $\phi$ denotes the set of all finite sequences, i.e., of sequences that terminate in zeros. Throughout we denote $p'$ as the conjugate of $p$ for $1\leq p<\infty$, i.e., $p{'} =\frac{ p }{p-1}$ for $p>1$ and $p'=\infty$ for $p=1$. The following known results are fundamental for our investigation.\\
\begin{lem}{\rm \cite{MUR3}}{\label{lem4}}
Let $X$ denote any of the sequence spaces $c_0 $ or $l_{\infty}$. If $A\in (X, c)$, then we have
\begin{align*}
&(i)~\alpha_k= \displaystyle\lim_{n\rightarrow\infty}{a}_{nk} \mbox{~exists for all~} k\in \mathbb{N}_{0},\\
&(ii)~\alpha=(\alpha_k)\in l_1,\\
&(iii)~\displaystyle\sup_{n}\displaystyle\sum_{k=0}^{\infty}|a_{nk}-\alpha_k|<\infty, \\
&(iv)~\displaystyle\lim_{n\rightarrow\infty}A_n(x)=\displaystyle\sum_{k=0}^{\infty}\alpha_k x_k \mbox{~for all~}x=(x_k)\in X.
\end{align*}
\end{lem}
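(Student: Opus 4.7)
The plan is to reduce the lemma to the Stieglitz--Tietz characterizations stated as Theorem 4.4(f) and Theorem 4.4(g), which give that $A\in(c_0,c)$ iff conditions $(4.5)$ and $(4.9)$ hold, while $A\in(l_\infty,c)$ iff $(4.5)$, $(4.9)$ and $(4.10)$ hold. Part (i) is then immediate: condition $(4.9)$ says $\alpha_k=\lim_n a_{nk}$ exists for every $k$, and in particular it can also be obtained coordinate-wise since the unit vectors $e_k$ lie in both $c_0$ and $l_\infty$ and $Ae_k=(a_{nk})_n\in c$.

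For parts (ii) and (iii), I would first set $M=\sup_n\sum_{k=0}^{\infty}|a_{nk}|$, which is finite by $(4.5)$. For every fixed $N\in\mathbb{N}_0$, Fatou / termwise passage to the limit gives
\[
\sum_{k=0}^{N}|\alpha_k|=\lim_{n\to\infty}\sum_{k=0}^{N}|a_{nk}|\leq M,
\]
so letting $N\to\infty$ yields $\alpha=(\alpha_k)\in l_1$ with $\|\alpha\|_1\leq M$, proving (ii). In the $c_0$-case, (iii) then follows from the triangle inequality: $\sum_k|a_{nk}-\alpha_k|\leq M+\|\alpha\|_1$ uniformly in $n$. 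In the $l_\infty$-case, (iii) is even a consequence of the stronger condition $(4.10)$, because a sequence that tends to $0$ is automatically bounded.

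For (iv), the two cases require slightly different arguments. When $X=l_\infty$, condition $(4.10)$ directly gives, for every $x\in l_\infty$,
\[
\Bigl|A_n(x)-\sum_{k=0}^{\infty}\alpha_k x_k\Bigr|\leq \|x\|_\infty\sum_{k=0}^{\infty}|a_{nk}-\alpha_k|\longrightarrow 0,
\]
so the identity in (iv) holds. When $X=c_0$, condition $(4.10)$ is not available, so I would split the tail and use the uniform $l_1$-bound from (iii). Given $\varepsilon>0$, pick $N$ so large that $\sup_{k>N}|x_k|\cdot(M+\|\alpha\|_1)<\varepsilon/2$, which is possible because $x\in c_0$; then for that fixed $N$ use $(4.9)$ to choose $n_0$ with $\bigl|\sum_{k=0}^{N}(a_{nk}-\alpha_k)x_k\bigr|<\varepsilon/2$ for all $n\geq n_0$. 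Combining the two estimates proves (iv).

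The only genuinely delicate step is the $c_0$-case of (iv); the rest amounts to bookkeeping with the Stieglitz--Tietz conditions. The main technical point there is that one cannot appeal to $(4.10)$, and one must exploit both the uniform bound on $\sum_k|a_{nk}-\alpha_k|$ obtained in (iii) and the decay $|x_k|\to 0$ to truncate the series uniformly in $n$ before invoking the pointwise convergence from (i).
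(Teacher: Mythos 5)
Your proof is correct, but note that the paper itself offers no proof of this lemma at all: it is quoted verbatim from Mursaleen and Noman \cite{MUR3} as a known auxiliary result, so there is nothing in the text to compare against line by line. What you have done is reconstruct, essentially faithfully, the standard argument from the literature: parts (i)--(iii) are exactly the bookkeeping with the Stieglitz--Tietz conditions (4.5), (4.9) and (4.10) of Theorem 4.4(f),(g) that you describe (your passage $\sum_{k=0}^{N}|\alpha_k|=\lim_{n}\sum_{k=0}^{N}|a_{nk}|\leq M$ followed by $N\to\infty$ is the usual route to $\alpha\in l_1$, and (iii) is indeed just the triangle inequality $\sum_k|a_{nk}-\alpha_k|\leq M+\|\alpha\|_1$), while part (iv) correctly splits into the two cases: for $l_\infty$ the Schur-type condition (4.10) gives the conclusion in one line, and for $c_0$ one must use the uniform bound from (iii) together with $|x_k|\to 0$ to control the tail $\sum_{k>N}|a_{nk}-\alpha_k||x_k|\leq\sup_{k>N}|x_k|\,(M+\|\alpha\|_1)$ uniformly in $n$ before letting $n\to\infty$ in the finite head. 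You correctly identify this truncation as the only delicate step, and your handling of it is sound. The one cosmetic remark is that for (i) you do not need Theorem 4.4 at all, as you yourself observe: $Ae_k=(a_{nk})_n\in c$ already does it. So the proposal is a complete and correct proof of a statement the authors chose to cite rather than prove.
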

\begin{lem}{(\rm \cite{MAL5}, Theorem 1.29)}{\label{lem5}}
Let $X$ denote any of the spaces $c_0$, $c$ or $l_\infty$. Then, we have $X^\beta=l_1$ and $\|a\|_X^*= \|a\|_{l_1}$ for all $a\in l_1$.
\end{lem}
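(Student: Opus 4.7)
The plan is to prove both claims simultaneously for all three choices of $X$ by exploiting the chain $c_0\subseteq c\subseteq l_\infty$. First I would dispose of the easy direction $l_1\subseteq X^\beta$ together with the upper bound on the norm in one stroke: for any $a\in l_1$ and any $x\in l_\infty$ (which subsumes all three cases since $X\subseteq l_\infty$), absolute convergence of $\sum_{k=0}^{\infty}a_k x_k$ follows from $|a_k x_k|\leq|a_k|\,\|x\|_\infty$. This shows $a\in X^\beta$, and also yields $\bigl|\sum_{k=0}^{\infty}a_k x_k\bigr|\leq \|a\|_{l_1}\|x\|_\infty$, so taking the supremum over $x\in S_X$ gives $\|a\|_X^{*}\leq\|a\|_{l_1}$.

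Next, for the reverse inclusion $X^\beta\subseteq l_1$, I would reduce to the smallest ambient space. Since $c_0\subseteq c\subseteq l_\infty$, we have $l_\infty^\beta\subseteq c^\beta\subseteq c_0^\beta$, so it suffices to show $c_0^\beta\subseteq l_1$. Given $a\in c_0^\beta$, define bounded linear functionals $f_n\colon c_0\to\mathbb{K}$ by $f_n(x)=\sum_{k=0}^{n}a_k x_k$. A standard extremal argument (pick $x_k=\overline{\operatorname{sgn}(a_k)}$ for $k\leq n$ and $x_k=0$ otherwise) shows that $\|f_n\|=\sum_{k=0}^{n}|a_k|$. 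By hypothesis the sequence $\bigl(f_n(x)\bigr)_n$ converges for every $x\in c_0$, so the Banach--Steinhaus uniform boundedness principle applied in the Banach space $c_0$ yields $\sup_n\|f_n\|<\infty$, i.e.\ $\sum_{k=0}^{\infty}|a_k|<\infty$, and hence $a\in l_1$.

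For the matching lower bound $\|a\|_X^{*}\geq\|a\|_{l_1}$, I would exhibit explicit test sequences in $S_X$. Fix $a\in l_1\setminus\{0\}$; for each $N\in\mathbb{N}_0$ let $x^{(N)}$ be the finitely supported sequence with $x^{(N)}_k=\overline{\operatorname{sgn}(a_k)}$ for $0\leq k\leq N$ and $x^{(N)}_k=0$ for $k>N$. Because $x^{(N)}$ has finite support it lies in $c_0\subseteq c\subseteq l_\infty$, and for $N$ large enough that some $a_k\neq 0$ with $k\leq N$ we have $\|x^{(N)}\|_\infty=1$. By construction $\sum_{k=0}^{\infty}a_k x^{(N)}_k=\sum_{k=0}^{N}|a_k|$, so $\|a\|_X^{*}\geq\sum_{k=0}^{N}|a_k|$; letting $N\to\infty$ completes the proof.

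The only genuinely nontrivial step is the reverse inclusion $c_0^\beta\subseteq l_1$, where I rely on the uniform boundedness principle; every other ingredient is a direct termwise estimate. The argument is uniform across the three spaces because the same finitely supported test sequences $x^{(N)}$ realise the lower bound in each of $c_0$, $c$, and $l_\infty$, and the $\beta$-dual inclusions reverse the chain $c_0\subseteq c\subseteq l_\infty$ once and for all.
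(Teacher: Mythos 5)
Your proof is correct. Note, however, that the paper does not prove this lemma at all: it is stated as a quoted result from \cite{MAL5} (Theorem 1.29 there), so there is no in-paper argument to compare against. Your three steps are exactly the standard ones: the inclusion $l_1\subseteq X^\beta$ together with $\|a\|_X^*\leq\|a\|_{l_1}$ by the termwise estimate $|a_kx_k|\leq |a_k|\|x\|_\infty$; the hard inclusion $X^\beta\subseteq l_1$ reduced via $l_\infty^\beta\subseteq c^\beta\subseteq c_0^\beta$ to $c_0^\beta\subseteq l_1$ and settled by Banach--Steinhaus applied to the partial-sum functionals $f_n$ (whose norms $\sum_{k=0}^n|a_k|$ are realised by finitely supported sign vectors, which do lie in $c_0$); and the matching lower bound $\|a\|_X^*\geq\|a\|_{l_1}$ from the same sign vectors viewed as elements of $S_X$ for each of the three spaces. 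All details check out, including the handling of the trivial case $a=0$ and the fact that convergence (not absolute convergence) of $\sum a_kx_k$ is all that membership in $X^\beta$ provides, which is precisely what the uniform boundedness argument needs.
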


\begin{lem}{\rm \cite{MUR3}}{\label{lem6}}
Let $X \supset \phi$ and $Y$ be $BK$ spaces. Then we have $(X, Y)\subset \mathcal{B}(X, Y)$, i.e., every matrix $A\in (X, Y)$ defines an operator $L_A\in \mathcal{B}(X, Y)$, where $L_A(x)=Ax$ for all $x\in X$.
\end{lem}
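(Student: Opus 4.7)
The plan is to combine the closed graph theorem with the standard fact that, on a $BK$ space containing $\phi$, every element of the $\beta$-dual gives rise to a continuous linear functional. Well-definedness and linearity of $L_A$ are immediate: $A\in (X,Y)$ means $Ax\in Y$ for every $x\in X$, and matrix multiplication distributes over addition and scalar multiplication. So the only real content is boundedness.

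Since $X$ and $Y$ are Banach spaces, I would invoke the closed graph theorem. Take a sequence $(x^{(k)})\subset X$ with $x^{(k)}\to x$ in $X$ and $L_A(x^{(k)})\to y$ in $Y$; the goal is $y=L_A(x)$, i.e. $y_n=\sum_{j}a_{nj}x_j$ for every $n$. Because $Y$ is a $BK$ space, the coordinate projections are continuous on $Y$, so
\[
(L_A(x^{(k)}))_n\;=\;\sum_{j}a_{nj}x_j^{(k)}\;\longrightarrow\;y_n\qquad\text{as }k\to\infty,
\]
for each fixed $n$. It therefore suffices to show that, for every $n$, the map $x\mapsto \sum_{j}a_{nj}x_j$ is continuous on $X$; combining this with the above display yields $y_n=(L_A(x))_n$.

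For that I would use the following auxiliary lemma (which I expect is the real obstacle and the one piece that needs an argument rather than mere unpacking of definitions): if $X$ is a $BK$ space with $X\supset\phi$ and $a\in X^{\beta}$, then the linear functional $f_a(x)=\sum_{k}a_kx_k$ lies in $X^{\ast}$. The natural proof uses Banach--Steinhaus: the partial sum functionals $f_a^{(N)}(x)=\sum_{k=0}^{N}a_kx_k$ are finite linear combinations of coordinate functionals on $X$, hence continuous by the $BK$ hypothesis; they converge pointwise to $f_a$ on $X$ because $a\in X^{\beta}$ (the series $\sum a_k x_k$ converges for each $x\in X$); so by the uniform boundedness principle $\sup_N\|f_a^{(N)}\|<\infty$ and the pointwise limit $f_a$ is continuous. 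Applying this lemma to $a=A_n\in X^{\beta}$ (which is in $X^\beta$ precisely because $A\in(X,Y)$ forces $\sum_k a_{nk}x_k=(Ax)_n$ to converge for every $x\in X$) gives the continuity of the $n$-th coordinate of $L_A$.

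Putting the pieces together: for each $n$ both $x^{(k)}\mapsto (L_A(x^{(k)}))_n\to y_n$ (by the $BK$ property of $Y$) and $x^{(k)}\mapsto (L_A(x^{(k)}))_n\to (L_A(x))_n$ (by continuity of the $n$-th coordinate of $L_A$ on $X$), so $y_n=(L_A(x))_n$ for all $n$, i.e.\ $y=L_A(x)$. The graph of $L_A$ is closed, hence $L_A\in\mathcal B(X,Y)$ by the closed graph theorem, and the inclusion $(X,Y)\subset\mathcal B(X,Y)$ follows. The only nontrivial step is the auxiliary lemma that $X^{\beta}\hookrightarrow X^{\ast}$ via $a\mapsto f_a$; everything else is essentially a routine application of the definitions of $BK$ space and matrix class $(X,Y)$.
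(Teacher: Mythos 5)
The paper does not prove this lemma; it is quoted verbatim from Mursaleen and Noman \cite{MUR3}, where it is established by exactly the argument you give: the closed graph theorem for the Banach spaces $X$ and $Y$, combined with the fact that each row $A_n\in X^{\beta}$ induces a continuous functional on the $BK$ space $X$ via Banach--Steinhaus applied to the partial-sum functionals. Your proof is correct and complete; the only cosmetic remark is that the hypothesis $X\supset\phi$ is never actually used in your argument (the result holds for arbitrary $BK$, indeed $FK$, spaces), so its presence in the statement is for consistency with the surrounding lemmas rather than a logical necessity.
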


\begin{lem}{\rm \cite{DJO}}{\label{lem7}}
Let $X\supset \phi$ be a $BK$ space and $Y$ be any of the spaces $c_0$, $c$ or $l_\infty$. If $A\in (X, Y)$, then we have
$$\|L_A\|= \|A\|_{(X, l_\infty)}=\displaystyle\sup_{n}\|A_n\|_X ^{*}<\infty.$$
\end{lem}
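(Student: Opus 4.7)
The plan is to prove the identity by unwinding the definition of the operator norm and exploiting the fact that $Y$ carries the sup norm. First, I would invoke Lemma \ref{lem6} to conclude that $L_A \in \mathcal{B}(X, Y)$, so $\|L_A\| < \infty$ automatically; this disposes of the finiteness assertion as soon as the main identity is established.

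Next, I would observe that each of $c_0$, $c$, and $l_\infty$ carries the norm $\|y\|_\infty = \sup_n |y_n|$, so for every $x \in X$,
\[
\|L_A(x)\|_Y = \|Ax\|_\infty = \sup_n |A_n(x)|.
\]
Taking the supremum over $x \in S_X$ and interchanging the two suprema (which is permissible without restriction since both are suprema of a nonnegative quantity) would yield
\[
\|L_A\| = \sup_{x \in S_X} \sup_n |A_n(x)| = \sup_n \sup_{x \in S_X} |A_n(x)| = \sup_n \|A_n\|_X^*,
\]
where the last equality is the definition (\ref{eq0}). The middle expression $\|A\|_{(X, l_\infty)}$ is by convention the operator norm of the matrix $A$ regarded as a map $X \to l_\infty$, and since $l_\infty$ also carries the sup norm, the identical computation applied with $Y$ replaced by $l_\infty$ shows that all three quantities in the lemma agree.

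The only point requiring any care -- the mildest of obstacles -- is to check that $\|A_n\|_X^*$ in (\ref{eq0}) is well-defined in the first place. I would argue that $A \in (X, Y)$ forces each row $A_n$ to lie in $X^\beta$, so the series $\sum_k a_{nk} x_k$ converges for every $x \in X$; since $X$ is a $BK$ space, each row induces a continuous linear functional on $X$ whose dual norm is precisely $\|A_n\|_X^*$. Consequently the supremum over $n$ is meaningful, and the swap-of-suprema computation above closes the proof.
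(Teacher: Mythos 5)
Your argument is correct and is essentially the standard proof of this result: the paper itself states Lemma \ref{lem7} without proof, citing \cite{DJO}, and the cited proof proceeds exactly as you do, by writing $\|L_A(x)\|_Y=\sup_n|A_n(x)|$ (all three target spaces carrying the sup norm), interchanging the two suprema, and identifying $\sup_{x\in S_X}|A_n(x)|$ with $\|A_n\|_X^*$ from (\ref{eq0}), with finiteness supplied by Lemma \ref{lem6} and well-definedness of $\|A_n\|_X^*$ by $A_n\in X^\beta$. Nothing is missing.
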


\begin{lem}{\rm \cite{MAL5}}{\label{lem9}}
Let $Q \in \mathcal{M}_{c_0}$ and $P_l : c_0 \rightarrow c_0$ $(l \in \mathbb{N}_{0})$ be the operator defined by $ P_l(x) = (x_0, x_1, \cdots, x_l, 0, 0, \cdots)$ for all $x =(x_k) \in c_0$. Then we have
$$ \chi (Q) = \displaystyle \lim_{l \rightarrow \infty}\Big( \sup_{x \in Q}\|(I -P_l)(x)\|_{\infty} \Big),$$
where $I$ is the identity operator on $c_0$.
\end{lem}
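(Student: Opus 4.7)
The plan is to establish the equality by proving both inequalities separately. Set $r_l = \sup_{x \in Q} \|(I-P_l)(x)\|_\infty$. Since $(I-P_{l+1})(x)$ differs from $(I-P_l)(x)$ by zeroing out one additional coordinate, the supremum $\sup_{k > l}|x_k|$ is nonincreasing in $l$, so $(r_l)$ is a nonincreasing sequence bounded below by $0$, hence convergent to some $r \in [0,\infty)$. It therefore suffices to prove $\chi(Q) \leq r$ and $r \leq \chi(Q)$.

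For the upper bound $\chi(Q) \leq r$, I would fix $\epsilon > 0$ and exploit that $P_l(Q)$ lies in the finite-dimensional subspace $\mathrm{span}\{e_0, \dots, e_l\}$ and is bounded, hence totally bounded. Choosing a finite $\epsilon$-net $\{P_l(x_1), \dots, P_l(x_N)\}$ for $P_l(Q)$, any $x \in Q$ admits an index $i$ with $\|P_l(x) - P_l(x_i)\|_\infty < \epsilon$, so
\[
\|x - P_l(x_i)\|_\infty \leq \|P_l(x) - P_l(x_i)\|_\infty + \|(I-P_l)(x)\|_\infty < \epsilon + r_l.
\]
Thus $Q$ is covered by $N$ balls of radius $\epsilon + r_l$, giving $\chi(Q) \leq \epsilon + r_l$. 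Letting $\epsilon \to 0$ and then $l \to \infty$ yields $\chi(Q) \leq r$.

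For the reverse inequality $r \leq \chi(Q)$, the key point is to use the defining property of $c_0$: tails vanish. Fix $\eta > \chi(Q)$ and $\epsilon > 0$, and cover $Q$ by finitely many balls $B(y_1,\eta), \dots, B(y_N,\eta)$ with centres $y_i \in c_0$. Because each $y_i$ is a null sequence, there is an $l_0$ such that $\|(I-P_l)(y_i)\|_\infty < \epsilon$ for all $i$ and all $l \geq l_0$. Given $x \in Q$, pick $i$ with $\|x - y_i\|_\infty < \eta$; the triangle inequality gives
\[
\|(I-P_l)(x)\|_\infty \leq \|(I-P_l)(x-y_i)\|_\infty + \|(I-P_l)(y_i)\|_\infty \leq \|x - y_i\|_\infty + \epsilon < \eta + \epsilon.
\]
Taking the supremum over $x \in Q$ shows $r_l \leq \eta + \epsilon$ for all $l \geq l_0$, hence $r \leq \eta + \epsilon$. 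Since $\eta > \chi(Q)$ and $\epsilon > 0$ were arbitrary, $r \leq \chi(Q)$.

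The main obstacle is isolating which direction actually requires the $c_0$ hypothesis. The upper bound holds in considerable generality for any sequence space with a natural tail-projection scheme, but the lower bound crucially uses that the centres of the finite cover can be taken inside $c_0$ so that their tails are uniformly small; it is this point that prevents a direct analogue in $\ell_\infty$, whose balls are not approximable in this coordinate-wise sense.
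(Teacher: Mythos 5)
Your proof is correct. Note that the paper itself gives no proof of this lemma --- it is quoted verbatim from Malkowsky and Rako\v{c}evi\'{c} \cite{MAL5} --- so there is nothing to compare against line by line; your argument is the standard one for this classical result (finite-dimensional truncations give total boundedness up to the tail supremum for the inequality $\chi(Q)\leq r$, and the vanishing tails of the finitely many centres $y_i\in c_0$ give $r\leq\chi(Q)$), and both halves are carried out correctly, including the correct observation that only the lower bound genuinely uses the $c_0$ hypothesis.
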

Let $z=(z_n) \in c$. Then $z$ has a unique representation $z = \hat{\ell} e + \displaystyle \sum_{n=0}^{\infty}(z_n -\hat{\ell})e_{n}$, where $\hat{\ell} = \displaystyle \lim_{n \rightarrow \infty} z_n$. We now define the projections
$P_l$ $(l \in \mathbb{N}_{0})$ from $c$ onto the linear span of $\{e, e_0, e_1, \cdots, e_l \}$ as
$$P_l(z) = \hat{\ell} e + \displaystyle \sum_{n=0}^{l}(z_n -\hat{\ell})e_{n},$$
for all $z \in c$ and $\hat{\ell}= \displaystyle \lim_{n \rightarrow \infty} z_n$.\\
 Then the following result gives an estimate for the Hausdorff measure of noncompactness in the $BK$ space $c$.
\begin{lem}{\rm  \cite{MAL5}}{\label{lem10}}
Let $Q\in \mathcal{M}_c$ and $P_l: c\rightarrow c$ be the projector from $c$ onto the linear span of $\{e, e_{0}, e_{1}, \ldots e_l\}$. Then we have
$$\frac{1}{2} \displaystyle\lim_{l \rightarrow\infty}\Big( \displaystyle\sup_{x\in Q}\|(I-P_l)(x)\|_{\infty}\Big)\leq \chi(Q)\leq \displaystyle\lim_{l\rightarrow\infty}\Big( \displaystyle\sup_{x\in Q}\|(I-P_l)(x)\|_{\infty}\Big),$$
where $I$ is the identity operator on $c$.
\end{lem}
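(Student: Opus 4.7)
The statement is a classical inequality due to Malkowsky--Rako\v{c}evi\'{c} (the reference \cite{MAL5}), so the proof is standard. Still, here is how I would approach it. The plan is to treat the two inequalities separately: the upper bound follows from the general submeasure properties of $\chi$ together with the fact that $P_l$ has finite-dimensional range, while the lower bound uses a Goldenstein--Gokhberg--Markus type covering argument in which the operator norm $\|I-P_l\|$ enters, and for the particular projections on $c$ one has $\|I-P_l\|=2$, which accounts for the factor $\tfrac{1}{2}$.

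For the \textbf{upper bound}, I would start from the identity $x=P_l(x)+(I-P_l)(x)$, which yields the inclusion $Q\subset P_l(Q)+(I-P_l)(Q)$. By the subadditivity and monotonicity of $\chi$,
\[
\chi(Q)\ \le\ \chi(P_l(Q))+\chi((I-P_l)(Q)).
\]
Since $P_l$ maps $c$ into the finite-dimensional linear span of $\{e,e_0,\dots,e_l\}$, the set $P_l(Q)$ is bounded in a finite-dimensional Banach space, hence totally bounded, so $\chi(P_l(Q))=0$. On the other hand $(I-P_l)(Q)$ is contained in the closed ball of radius $r_l:=\sup_{x\in Q}\|(I-P_l)(x)\|_\infty$ centred at the origin, so $\chi((I-P_l)(Q))\le r_l$. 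Finally, because $(I-P_l)x$ only supremises over indices $n>l$, the sequence $r_l$ is monotonically non-increasing in $l$, so $\lim_{l\to\infty}r_l$ exists and passing to this limit gives $\chi(Q)\le \lim_{l\to\infty}\sup_{x\in Q}\|(I-P_l)(x)\|_\infty$.

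For the \textbf{lower bound}, I would first compute $\|I-P_l\|$. A direct inspection of the formula $((I-P_l)x)_n = x_n-\hat{\ell}$ for $n>l$ and $=0$ for $n\le l$, together with the test sequence
\[
x=(\underbrace{0,\dots,0}_{l+1\ \text{terms}},\,1,-1,-1,\dots)\in c,\qquad \|x\|_\infty=1,
\]
for which $\|(I-P_l)x\|_\infty=2$, shows $\|I-P_l\|=2$. Now fix $\epsilon>0$ and write $c:=\chi(Q)$. By definition of $\chi$, there exist finitely many centres $y_1,\dots,y_N\in c$ such that $Q\subset\bigcup_{i=1}^N B(y_i,c+\epsilon)$. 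For any $x\in Q$, choosing an appropriate $y_i$ and using the triangle inequality and the norm bound above,
\[
\|(I-P_l)x\|_\infty\ \le\ \|I-P_l\|\cdot\|x-y_i\|_\infty+\|(I-P_l)y_i\|_\infty\ \le\ 2(c+\epsilon)+\max_{1\le j\le N}\|(I-P_l)y_j\|_\infty.
\]
Since each $y_j\in c$, $\|(I-P_l)y_j\|_\infty\to 0$ as $l\to\infty$, so taking the supremum over $x\in Q$ and then $\lim_{l\to\infty}$ yields $\lim_{l\to\infty}\sup_{x\in Q}\|(I-P_l)x\|_\infty\le 2(c+\epsilon)$. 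Letting $\epsilon\downarrow 0$ gives the desired lower bound.

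The one genuinely delicate point is pinning down $\|I-P_l\|=2$; this is where the $c$-specific projection (the extra term $\hat{\ell}e$) differs from the $c_0$ case treated in Lemma \ref{lem9}, and it is precisely the reason the factor $\tfrac{1}{2}$ cannot be improved. Everything else uses only the basic submeasure properties of $\chi$ and the covering definition.
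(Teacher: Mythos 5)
Your proof is correct. Note that the paper does not prove this lemma at all --- it is quoted verbatim from Malkowsky--Rako\v{c}evi\'{c} \cite{MAL5} (Theorem 1.29 there) --- so there is no in-paper argument to compare against; your write-up is the standard one (upper bound via $Q\subset P_l(Q)+(I-P_l)(Q)$ and the submeasure properties of $\chi$, lower bound via the Goldenstein--Gokhberg--Markus covering argument with the computation $\|I-P_l\|=2$ supplying the factor $\tfrac12$), and all steps, including the monotonicity of $\sup_{x\in Q}\|(I-P_l)x\|_\infty$ in $l$ and the verification that $\|(I-P_l)y\|_\infty\to 0$ for each fixed $y\in c$, check out.
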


\begin{lem}{\rm  \cite{MAL5}}{\label{lem8}}
Let $X,Y$ be two Banach spaces and $L \in \mathcal{B}(X, Y)$. Then $$\|L\|_{\chi} = \chi(L(S_X))$$
and $$L \in \mathcal{C}(X, Y) ~\mbox{if and only if}~ \|L\|_{\chi} =0.$$
\end{lem}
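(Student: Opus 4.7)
The plan is to establish both parts of the lemma by reducing them to the characterization of compactness via total boundedness in complete metric spaces, combined with the basic property (recalled earlier in this section) that $\chi(Q)=0$ if and only if $Q$ is totally bounded.

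For the identity $\|L\|_{\chi}=\chi(L(S_X))$, I will treat $\|L\|_{\chi}$ as the standard operator measure of noncompactness, namely
$$\|L\|_{\chi}=\inf\bigl\{c\ge 0:\chi(L(Q))\le c\,\chi(Q)\ \text{for every bounded}\ Q\subset X\bigr\}.$$
The bound $\chi(L(S_X))\le\|L\|_{\chi}$ is immediate by taking $Q=S_X$ and using $\chi(S_X)\le 1$. For the reverse inequality, given a bounded $Q\subset X$ with $\chi(Q)<r$, cover $Q$ by finitely many balls $B(x_i,r)$; by linearity and the scaling and translation-invariance properties of $\chi$ listed in the preamble, $L(Q)$ is then covered by finitely many translates of $r\,L(B_X)$, so $\chi(L(Q))\le r\,\chi(L(S_X))$. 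Letting $r\downarrow\chi(Q)$ and taking the infimum gives $\|L\|_{\chi}\le\chi(L(S_X))$. (If instead the authors define $\|L\|_{\chi}:=\chi(L(S_X))$, this step becomes vacuous.)

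For the equivalence $L\in\mathcal{C}(X,Y)\iff\|L\|_{\chi}=0$, I would chain four standard equivalences: $L$ is compact iff for every bounded sequence $(x_n)\subset X$ the image $(Lx_n)$ has a convergent subsequence in $Y$; by linearity and homogeneity of $L$ this is equivalent to the set $L(S_X)$ being relatively compact in $Y$; since $Y$ is a Banach space, relative compactness of $L(S_X)$ is equivalent to its being totally bounded; and by the recalled property of $\chi$, total boundedness of $L(S_X)$ is equivalent to $\chi(L(S_X))=0$, which by the first part equals $\|L\|_{\chi}=0$.

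The main obstacle is just bookkeeping in the first part: one must carefully exploit subadditivity, homogeneity, and monotonicity of $\chi$ to transfer a finite $r$-net for $Q$ into a finite $r\,\chi(L(S_X))$-net for $L(Q)$. The second part is then essentially an unrolling of definitions using the already-cited total-boundedness characterization of $\chi=0$; no extra analytic input is needed.
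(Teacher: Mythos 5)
The paper offers no proof of this lemma at all: it is quoted verbatim from Malkowsky--Rako\v{c}evi\'{c} \cite{MAL5}, so the only comparison available is with the standard argument from that source, which is essentially what you give, and your outline is correct (including the correct reading of $\|L\|_{\chi}$ as the $(\chi,\chi)$-operator seminorm $\inf\{c\ge 0:\chi(L(Q))\le c\,\chi(Q)\ \text{for all bounded }Q\}$). The one step where you lean on more than the properties of $\chi$ listed in the paper's preamble is the silent replacement of $\chi(L(B_X))$ by $\chi(L(S_X))$ after the covering step: translation invariance, homogeneity and monotonicity alone do not yield this, and you need either the invariance of $\chi$ under convex hulls together with $L(B_X)\subset\operatorname{conv}\bigl(L(S_X)\cup\{0\}\bigr)$, or a short direct net argument (an $\epsilon$-net $\{y_j\}$ for $L(S_X)$ and a $\delta$-net for $[0,1]$ produce an $(\epsilon+\delta\max_j\|y_j\|)$-net for $L(B_X)$). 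With that detail supplied, both the identity and the compactness criterion follow exactly as you describe.
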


We establish the following lemmas which are required to characterize the classes of compact operators with the help of Hausdorff measure of noncompactness.
\begin{lem}{\label{lem11}}
Let $X(r, s, t; \Delta^{(m)})$ be any sequence spaces for $X\in\{c_0, l_{\infty}\}$. If $a=(a_k)\in [X(r, s, t; \Delta^{(m)})]^ \beta$ then
$\tilde{a}= (\tilde{a}_k)\in X^{\beta} = l_1$ and the equality $$\displaystyle\sum_{k=0}^{\infty}a_k x_k=\displaystyle\sum_{k=0}^{\infty}\tilde{a}_k y_k$$ holds for every $x=(x_k)\in X(r, s, t; \Delta^{(m)})$ and $y =(y_k) \in X$, where $y = (A(r,s,t). \Delta^{(m)})x$. In addition
\begin{equation}{\label{eq1}}
 \tilde{a}_{k}= r_{k} \bigg [ \frac{a_{k}}{s_{0}t_{k}} + \sum\limits_{i=k}^{k+1}(-1)^{i-k} \frac{D_{i-k}^{(s)}}{t_{i}} \sum\limits_{j=k+1}^{\infty}\binom{m+j-i-1} {j-i}  a_{j}  +
\sum\limits_{l=2}^{\infty} (-1)^{l} \frac{D_{l}^{(s)}}{t_{l+k}}\sum\limits_{j=k+l}^{\infty} \binom{m+j-k-l-1} {j-k-l} a_{j} \bigg ].
\end{equation}
\end{lem}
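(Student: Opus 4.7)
The plan is to apply Theorem~\ref{1} (combined with Remark~\ref{r_1}, which extends it to $X=l_\infty$) to the triangle $T = A(r,s,t)\cdot \Delta^{(m)}$, and then simply read off the expression already computed for $R_k(a)$ in the proof of the $\beta$-dual theorem.

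First, recall that the inverse $S=(s_{jk})$ of $T$ was determined explicitly in the proof of the $\beta$-dual theorem as
\[
s_{jk} = \sum_{i=k}^{j}(-1)^{i-k}\binom{m+j-i-1}{j-i}\frac{D_{i-k}^{(s)}}{t_i}\,r_k,\qquad 0\le k\le j,
\]
and $s_{jk}=0$ otherwise. Its transpose $R=S^t$ therefore satisfies $R_k(a) = \sum_{j=k}^\infty a_j\, s_{jk}$, and the row-by-row regrouping already carried out in the $\beta$-dual proof shows that this sum coincides exactly with the quantity $\tilde a_k$ of~\eqref{eq1}. I would not redo that calculation here, merely cite it.

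Next, since $c_0$ is a $BK$ space with AK and Remark~\ref{r_1} supplies the corresponding statement for $l_\infty$, Theorem~\ref{1} applies to both cases. The hypothesis $a\in[X(r,s,t;\Delta^{(m)})]^\beta = (X_T)^\beta$ then yields $R(a)\in (X^\beta)_R$. In particular $\tilde a = R(a)\in X^\beta$, and by Lemma~\ref{lem5} we have $X^\beta = l_1$, so $\tilde a\in l_1$ as required.

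For the identity, the ``moreover'' clause of Theorem~\ref{1} gives, for every $z\in X_T$,
\[
\sum_{k=0}^\infty a_k z_k \;=\; \sum_{k=0}^\infty R_k(a)\, T_k(z).
\]
Specializing to $z=x\in X(r,s,t;\Delta^{(m)})$ and using $T_k(x) = (A(r,s,t)\cdot\Delta^{(m)} x)_k = y_k$ together with $R_k(a)=\tilde a_k$, the desired formula $\sum a_k x_k = \sum \tilde a_k y_k$ follows at once. There is no real obstacle: the substantive content (the closed-form simplification of $R_k(a)$) was already dispatched in the $\beta$-dual section, and the remainder is a direct invocation of Theorem~\ref{1} with Remark~\ref{r_1} and Lemma~\ref{lem5}.
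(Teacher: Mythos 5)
Your proposal is correct and follows essentially the same route as the paper: both invoke Theorem~\ref{1} together with Remark~\ref{r_1} for the triangle $T=A(r,s,t)\cdot\Delta^{(m)}$, identify $R_k(a)$ with the expression $\tilde a_k$ already computed in the $\beta$-dual theorem, and read off the membership $\tilde a\in X^{\beta}=l_1$ and the identity $\sum a_k x_k=\sum \tilde a_k y_k$ from the ``moreover'' clause. (Only note the small slip where you wrote $R(a)\in(X^{\beta})_R$ instead of $a\in(X^{\beta})_R$, i.e.\ $R(a)\in X^{\beta}$; your next sentence states the correct conclusion.)
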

\begin{proof}
Let $a=(a_k)\in [X(r, s, t; \Delta^{(m)})]^ \beta$. Then by Theorem 4.7 and Remark 4.2, we have $R(a)= (R_{k}(a)) \in X^{\beta} =l_1$ and also
$$\displaystyle\sum_{k=0}^{\infty}a_k x_k=\displaystyle\sum_{k=0}^{\infty}R_{k}(a) T_k(x) \quad \forall~x \in X(r, s, t; \Delta^{(m)}),$$
where
\begin{center}
$R_{k}(a) = r_{k} \bigg [ \frac{a_{k}}{s_{0}t_{k}} + \sum\limits_{i=k}^{k+1}(-1)^{i-k} \frac{D_{i-k}^{(s)}}{t_{i}} \sum\limits_{j=k+1}^{\infty}\binom{m+j-i-1} {j-i}  a_{j}  +
\sum\limits_{l=2}^{\infty} (-1)^{l} \frac{D_{l}^{(s)}}{t_{l+k}}\sum\limits_{j=k+l}^{\infty} \binom{m+j-k-l-1} {j-k-l} a_{j} \bigg ]= \tilde{a}_k,$
\end{center}
and $ y =T(x)= (A(r,s,t). \Delta^{(m)})x$. This completes the proof.
\end{proof}

\begin{lem}{\label{lem12}}
Let $X(r, s, t; \Delta^{(m)})$ be any sequence spaces for $X\in\{c_0, l_{\infty}\}$. Then we have
$$\|a\|_{X(r, s, t; \Delta^{(m)})}^*= \|\tilde{a}\|_{l_1}=\displaystyle\sum_{k=0}^{\infty}|\tilde{a_k}|<\infty $$
for all $a=(a_k)\in [X(r, s, t; \Delta^{(m)})]^\beta$, where $\tilde{a}=(\tilde{a}_{k})$ is defined in (\ref{eq1}).
\end{lem}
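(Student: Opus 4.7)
The plan is to deduce the equality directly from the preceding lemma combined with the norm-preserving isomorphism established in Theorem~4.2 and the well-known fact (Lemma~\ref{lem5}) that $X^{\beta}=l_{1}$ with the dual norm coinciding with the $l_{1}$-norm for $X\in\{c_{0},c,l_{\infty}\}$. First, I would unwind the definition (\ref{eq0}) and write
\[
\|a\|_{X(r,s,t;\Delta^{(m)})}^{*}=\sup_{x\in S_{X(r,s,t;\Delta^{(m)})}}\Bigl|\sum_{k=0}^{\infty}a_{k}x_{k}\Bigr|.
\]
Then, since $a\in[X(r,s,t;\Delta^{(m)})]^{\beta}$, Lemma~\ref{lem11} applies and gives both the membership $\tilde{a}\in l_{1}$ and the identity $\sum_{k}a_{k}x_{k}=\sum_{k}\tilde{a}_{k}y_{k}$ with $y=(A(r,s,t).\Delta^{(m)})x$.

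Next, I would invoke Theorem~4.2 together with the explicit definition of the norm on $X(r,s,t;\Delta^{(m)})$: the mapping $T:x\mapsto y=(A(r,s,t).\Delta^{(m)})x$ is a bijective linear isometry from $X(r,s,t;\Delta^{(m)})$ onto $X$, so $x\in S_{X(r,s,t;\Delta^{(m)})}$ if and only if $y\in S_{X}$. Substituting this change of variables in the supremum yields
\[
\|a\|_{X(r,s,t;\Delta^{(m)})}^{*}=\sup_{y\in S_{X}}\Bigl|\sum_{k=0}^{\infty}\tilde{a}_{k}y_{k}\Bigr|=\|\tilde{a}\|_{X}^{*}.
\]
Finally, Lemma~\ref{lem5} identifies $\|\tilde{a}\|_{X}^{*}$ with $\|\tilde{a}\|_{l_{1}}=\sum_{k=0}^{\infty}|\tilde{a}_{k}|$, which is finite because $\tilde{a}\in l_{1}$ by Lemma~\ref{lem11}. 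Chaining these equalities delivers the desired identity.

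Conceptually there is no obstacle, since all the heavy lifting was performed in Lemma~\ref{lem11} (which relied on Theorem~4.7 and Remark~4.2 to compute the $\beta$-dual and to express the action of $a$ through the triangle $T$). The one point requiring a brief justification is that the norm on $X(r,s,t;\Delta^{(m)})$ introduced in Theorem~4.1 is precisely $\|T(\cdot)\|_{\infty}$, so the change of variables $x\leftrightarrow y=T(x)$ preserves the unit sphere; this is where the argument hinges, and it is immediate from the construction of the space as a matrix domain.
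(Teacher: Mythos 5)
Your argument is correct and follows the paper's own proof essentially verbatim: both invoke Lemma~\ref{lem11} for $\tilde{a}\in l_{1}$ and the identity $\sum_{k}a_{k}x_{k}=\sum_{k}\tilde{a}_{k}y_{k}$, both use the isometric correspondence $x\in S_{X(r,s,t;\Delta^{(m)})}\Leftrightarrow y\in S_{X}$ to rewrite the supremum in (\ref{eq0}), and both finish with Lemma~\ref{lem5}. No gaps; nothing further to add.
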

\begin{proof}
Let $a=(a_k)\in [X(r, s, t; \Delta^{(m)})]^\beta$. Then from Lemma \ref{lem11}, we have $\tilde{a}=(\tilde{a}_{k})\in l_1$.
Also $x\in S_{X(r, s, t; \Delta^{(m)})}$ if and only if $y=T(x) \in S_X$ as $\|x\|_{X(r, s, t; \Delta^{(m)})}= \|y\|_{\infty}$. From (\ref{eq0}), we have
$$\|a\|_{X(r, s, t; \Delta^{(m)})}^*= \displaystyle\sup_{x\in S_{X(r, s, t; \Delta^{(m)})}}\Big|\displaystyle\sum_{k=0}^{\infty}a_k x_k\Big|=\sup_{y\in S_X}\Big|\displaystyle\sum_{k=0}^{\infty}\tilde{a}_{k} y_k\Big|=  \|\tilde{a}\|_X^*.$$
Using by Lemma \ref{lem5}, we have $\|a\|_{X(r, s, t; \Delta^{(m)})}^*=\|\tilde{a}\|_X^*= \|\tilde{a}\|_{l_1}$, which is finite as
$\tilde{a}\in l_1$. This completes the proof.
\end{proof}

\begin{lem}{\label{lem13}}
Let $X(r, s, t; \Delta^{(m)})$ be any sequence spaces for $X \in \{c_0, l_{\infty}\}$, $Y$ be any sequence space and $A=(a_{nk})_{n,k}$ be an infinite matrix. If $A\in (X(r, s, t; \Delta^{(m)}), Y)$ then $\tilde{A} \in (X, Y)$ such that $Ax= \tilde{A}y$ for all $x\in X(r, s, t; \Delta^{(m)})$ and $y\in X$, which are connected by the relation $y = (A(r,s,t). \Delta^{(m)})x$ and \\ $\tilde{A}=(\tilde{a}_{nk})_{n,k}$ is given by
\begin{equation}\label{eq111}
\tilde{a}_{nk}= r_{k} \bigg [ \frac{a_{nk}}{s_{0}t_{k}} + \sum\limits_{i=k}^{k+1}(-1)^{i-k} \frac{D_{i-k}^{(s)}}{t_{i}} \sum\limits_{j=k+1}^{\infty}\binom{m+j-i-1} {j-i}  a_{nj}  +
\sum\limits_{l=2}^{\infty} (-1)^{l} \frac{D_{l}^{(s)}}{t_{l+k}}\sum\limits_{j=k+l}^{\infty} \binom{m+j-k-l-1} {j-k-l} a_{nj} \bigg ],
\end{equation}
provided the series on the right side converges for all $n, k$.
\end{lem}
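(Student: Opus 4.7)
The plan is to apply Lemma \ref{lem11} row-by-row to the matrix $A$ and then use the isomorphism of Theorem 4.2 to transfer the information from the space $X(r,s,t;\Delta^{(m)})$ to $X$.

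First I would note that since $A\in(X(r,s,t;\Delta^{(m)}),Y)$, the $A$-transform $Ax$ is defined for every $x\in X(r,s,t;\Delta^{(m)})$, which forces each row $A_{n}=(a_{nk})_{k}$ to lie in the $\beta$-dual $[X(r,s,t;\Delta^{(m)})]^{\beta}$. Applying Lemma \ref{lem11} to $a=A_{n}$ then produces a sequence $\tilde{A}_{n}=(\tilde{a}_{nk})_{k}\in X^{\beta}=l_{1}$ with $\tilde{a}_{nk}$ given exactly by the expression on the right side of (\ref{eq111}), and with the identity
\[
\sum_{k=0}^{\infty}a_{nk}x_{k}=\sum_{k=0}^{\infty}\tilde{a}_{nk}y_{k}
\]
valid for every $x\in X(r,s,t;\Delta^{(m)})$ and the associated $y=(A(r,s,t).\Delta^{(m)})x\in X$. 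In particular the series on the right of (\ref{eq111}) converges (being the $k$-th entry of an $l_{1}$ sequence), which justifies the definition of $\tilde{A}$ entrywise for every $n,k$.

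Next I would read the above identity as $(Ax)_{n}=(\tilde{A}y)_{n}$ for every $n$, so $Ax=\tilde{A}y$ as sequences. Since $A\in(X(r,s,t;\Delta^{(m)}),Y)$, we have $Ax\in Y$, and therefore $\tilde{A}y\in Y$. To upgrade this to the statement $\tilde{A}\in(X,Y)$, I would invoke Theorem 4.2: the map $T\colon x\mapsto (A(r,s,t).\Delta^{(m)})x$ is a linear bijection from $X(r,s,t;\Delta^{(m)})$ onto $X$. Consequently every $y\in X$ arises as $y=Tx$ for some $x\in X(r,s,t;\Delta^{(m)})$, and for that $y$ we have $\tilde{A}y=Ax\in Y$. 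This shows $\tilde{A}y\in Y$ for all $y\in X$, i.e., $\tilde{A}\in(X,Y)$.

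There is no real obstacle here once Lemma \ref{lem11} is in hand; the proof is essentially a bookkeeping argument combining the row-wise $\beta$-dual identity with the fact that $T$ is surjective onto $X$. The only point that needs a little care is the interchange of summations implicit in identifying $(Ax)_{n}$ with $(\tilde{A}y)_{n}$, but this is precisely the content of the equality in Lemma \ref{lem11} applied to $a=A_{n}$, so it does not require a separate argument.
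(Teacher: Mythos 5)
Your proposal is correct and follows essentially the same route as the paper's proof: apply Lemma \ref{lem11} row-by-row to get $\tilde{A}_n\in X^{\beta}=l_1$ and the identity $Ax=\tilde{A}y$, then use the fact that every $y\in X$ arises as $y=(A(r,s,t).\Delta^{(m)})x$ for some $x\in X(r,s,t;\Delta^{(m)})$ to conclude $\tilde{A}\in(X,Y)$. The paper phrases the last step via the explicit inverse $x=(\Delta^{(m)})^{-1}(A(r,s,t))^{-1}y$ rather than citing Theorem 4.2, but this is the same surjectivity argument.
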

\begin{proof}
We assume that $A\in (X(r, s, t; \Delta^{(m)}), Y)$, then $A_n \in [X(r, s, t; \Delta^{(m)})]^{\beta}$ for all $n$. Thus it follows from Lemma \ref{lem11}, we have $\tilde{A}_n \in X^{\beta}= l_1$ for all $n$ and $Ax = \tilde{A}y$ holds for every $x \in X(r, s, t; \Delta^{(m)})$, $y \in X$, which are connected by the relation $y = (A(r, s, t). \Delta^{(m)})x$. Hence $\tilde{A}y \in Y$. Since $x = (\Delta^{(m)})^{-1} (A(r, s, t))^{-1}y$, for every $y \in X$, we get some $x \in X(r, s, t; \Delta^{(m)})$ and hence $\tilde{A} \in (X, Y)$. This completes the proof.
\end{proof}
\begin{lem}{\label{lem14}}
Let $X(r, s, t; \Delta^{(m)})$ be any sequence spaces for $X\in\{c_0, l_{\infty}\}$, $A=(a_{nk})_{n,k}$ be an infinite matrix and $\tilde{A}= (\tilde{a}_{nk})_{n,k}$ be the associate matrix defined in (\ref{eq111}). If $A \in (X(r, s, t; \Delta^{(m)}), Y)$, where $Y \in \{c_0, c, l_{\infty}\}$, then $$ \|L_A\|= \|A\|_{(X, l_\infty)}=\displaystyle\sup_{n} \sum_{k=0}^{\infty}|\tilde{a}_{nk}|<\infty.$$
\end{lem}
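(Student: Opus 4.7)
The plan is to chain together the three preceding lemmas, so the argument is essentially a bookkeeping exercise once we have verified the $BK$-space hypotheses. First I would observe that $X(r,s,t;\Delta^{(m)})$ is a $BK$ space: by Theorem 4.1 it is a complete normed space, and since its norm is $\|x\|_{X(r,s,t;\Delta^{(m)})}=\|Tx\|_{X}$ with $T=A(r,s,t)\cdot\Delta^{(m)}$ a triangle and $X\in\{c_0,l_\infty\}$ a $BK$ space, the general principle recalled in Section~2 (if $T$ is a triangle and $X$ is $BK$, then $X_T$ is $BK$) applies. Likewise $Y\in\{c_0,c,l_\infty\}$ is $BK$, and $X(r,s,t;\Delta^{(m)})\supset\phi$, so Lemmas~\ref{lem6} and~\ref{lem7} are available.

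Next, since $A\in (X(r,s,t;\Delta^{(m)}),Y)$, Lemma~\ref{lem6} gives $L_A\in \mathcal{B}(X(r,s,t;\Delta^{(m)}),Y)$, and Lemma~\ref{lem7} yields
\[
\|L_A\|=\|A\|_{(X(r,s,t;\Delta^{(m)}),\,l_\infty)}=\sup_{n}\|A_n\|^{*}_{X(r,s,t;\Delta^{(m)})}<\infty,
\]
where $A_n$ is the $n$th row of $A$. Because $A_n\in [X(r,s,t;\Delta^{(m)})]^{\beta}$ for every $n$, Lemma~\ref{lem12} applies to each row and gives
\[
\|A_n\|^{*}_{X(r,s,t;\Delta^{(m)})}=\|\tilde A_n\|_{l_1}=\sum_{k=0}^{\infty}|\tilde a_{nk}|,
\]
where $\tilde A_n=(\tilde a_{nk})_k$ is precisely the sequence built from $A_n$ by the formula (\ref{eq111}). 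Substituting this into the previous display yields the claimed identity
\[
\|L_A\|=\sup_{n}\sum_{k=0}^{\infty}|\tilde a_{nk}|,
\]
and the supremum is finite because $\|L_A\|<\infty$.

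There is no genuine obstacle; the only points requiring care are (i) checking that $X(r,s,t;\Delta^{(m)})$ really is a $BK$ space containing $\phi$ so that Lemmas~\ref{lem6} and~\ref{lem7} are applicable, and (ii) noting that $\tilde A$ produced row-by-row via Lemma~\ref{lem11}/Lemma~\ref{lem12} coincides with the matrix $\tilde A=(\tilde a_{nk})$ defined in (\ref{eq111}), which is immediate by inspection of the two formulas. Everything else is a direct assembly of the established identities.
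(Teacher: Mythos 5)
Your proposal is correct and follows essentially the same route as the paper: apply Lemma~\ref{lem7} to get $\|L_A\|=\sup_n\|A_n\|^{*}_{X(r,s,t;\Delta^{(m)})}$ and then Lemma~\ref{lem12} row by row to rewrite each dual norm as $\sum_k|\tilde a_{nk}|$. The extra care you take in verifying the $BK$-space hypotheses and invoking Lemma~\ref{lem6} is sound but not a different argument, just a more explicit version of what the paper states in one line.
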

\begin{proof}
Since the spaces $X(r, s, t; \Delta^{(m)})$ for $X\in\{c_0, l_{\infty}\}$ are $BK$ spaces, using Lemma \ref{lem7} we have
$$ \|L_A\|= \|A\|_{(X, l_\infty)}= \displaystyle\sup_{n}\| A_n \|^{*}_{X(r, s, t; \Delta^{(m)})}.$$ Now from Lemma \ref{lem12}, we have
$$\|A_n \|_{X(r, s, t; \Delta^{(m)})}^*= \|\tilde{A_n}\|_{l_1}= \displaystyle\sum_{k=0}^{\infty}|\tilde{a}_{nk}|,$$
which is finite as $(\tilde{A_n}) \in l_1$. This completes the proof.
\end{proof}

Now we give the main results.
\begin{thm}{\label{thm1}}
Let $X(r, s, t; \Delta^{(m)})$ be any sequence spaces, where $X \in \{ c_0 , l_{\infty} \}$. \\
$(a)$ If $A\in (X(r, s, t; \Delta^{(m)}), c_0)$ then
\begin{equation}{\label{eq2}}
 \|L_A\|_\chi= \displaystyle\limsup_{n\rightarrow\infty}\displaystyle\sum_{k=0}^{\infty}|\tilde{a}_{nk}|
\end{equation}
$(b)$ If $A\in (X(r, s, t; \Delta^{(m)}), c)$ then
\begin{equation}{\label{eq3}}
\frac{1}{2} \displaystyle\limsup_{n\rightarrow\infty}\displaystyle\sum_{k=0}^{\infty}|\tilde{a}_{nk}-\tilde{\alpha}_k| \leq \displaystyle \|L_A\|_\chi \leq \displaystyle\limsup_{n\rightarrow\infty}\displaystyle\sum_{k=0}^{\infty}|\tilde{a}_{nk}-\tilde{\alpha}_k|,
\end{equation}
where $\tilde{\alpha}_k= \displaystyle\lim_{n\rightarrow\infty}\tilde{a}_{nk}$ for all $k$.\\
$(c)$ If $A \in (X(r, s, t; \Delta^{(m)}), l_{\infty})$ then
\begin{equation}{\label{eq4}}
 0 \leq \|L_A \|_{\chi} \leq \displaystyle \limsup_{n \rightarrow \infty} \sum_{k=0}^{\infty}|\tilde{a}_{nk} |.
\end{equation}
\end{thm}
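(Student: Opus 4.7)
The plan is to express $\|L_A\|_\chi$ as $\chi(L_A(S))$ via Lemma \ref{lem8}, where $S = S_{X(r,s,t;\Delta^{(m)})}$ is the unit sphere, and then to transfer the problem to the model space $X$ using the isometric triangle $T = A(r,s,t)\cdot\Delta^{(m)}$. For every $x\in S$, Lemma \ref{lem13} gives $Ax = \tilde A y$ with $y = T x \in S_X$, so the rows of $\tilde A$, rather than of $A$, carry all the information. The crucial computational fact underlying all three parts is the identity $\sup_{y\in S_X}|\tilde A_n y| = \sum_k |\tilde a_{nk}|$, which follows from Lemmas \ref{lem5} and \ref{lem12} and is essentially the content of Lemma \ref{lem14}.

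For part (a), since $L_A(S)\subset c_0$, Lemma \ref{lem9} furnishes
\[
\chi(L_A(S)) = \lim_{l\to\infty}\sup_{x\in S}\|(I-P_l)(L_Ax)\|_\infty.
\]
Because $(I-P_l)$ annihilates the first $l+1$ coordinates, the inner norm equals $\sup_{n>l}|\tilde A_n y|$; interchanging the two suprema and applying the identity above gives $\sup_{n>l}\sum_k|\tilde a_{nk}|$, whose limit in $l$ is precisely the right-hand side of \eqref{eq2}.

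For part (b), Lemma \ref{lem13} gives $\tilde A \in (X, c)$, so Lemma \ref{lem4} produces the coordinate limits $\tilde\alpha_k = \lim_n \tilde a_{nk}$ together with the identity $\lim_n (\tilde A y)_n = \sum_k \tilde\alpha_k y_k =: \hat\ell$ for all $y\in X$. Lemma \ref{lem10} now sandwiches $\chi(L_A(S))$ between $\tfrac{1}{2}$ and $1$ times $\lim_l \sup_{x\in S}\|(I-P_l)(L_Ax)\|_\infty$, where $P_l$ projects onto $\mathrm{span}\{e, e_0, \ldots, e_l\}$. A short computation shows that $(I-P_l)(L_Ax)_n$ equals $\sum_k(\tilde a_{nk}-\tilde\alpha_k)y_k$ for $n>l$ and vanishes otherwise; the supremum over $y\in S_X$ is then $\sum_k|\tilde a_{nk}-\tilde\alpha_k|$, which is finite by Lemma \ref{lem4}(iii). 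Letting $l\to\infty$ delivers both inequalities of \eqref{eq3}.

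Part (c) is the most delicate, since $l_\infty$ admits no exact formula analogous to Lemmas \ref{lem9}--\ref{lem10}; only the upper bound in \eqref{eq4} can be achieved. With $P_l(z) = (z_0,\ldots,z_l,0,\ldots)$ the standard truncation on $l_\infty$, the image $P_l(L_A(S))$ sits in a finite-dimensional subspace, hence has zero Hausdorff measure, while $(I-P_l)(L_A(S))$ is contained in the ball of radius $\sup_{x\in S}\|(I-P_l)(L_Ax)\|_\infty = \sup_{n>l}\sum_k|\tilde a_{nk}|$. Subadditivity of $\chi$ therefore yields $\chi(L_A(S)) \le \sup_{n>l}\sum_k|\tilde a_{nk}|$ for every $l$; passing to the limit in $l$ gives the claim, while the lower bound $0\le\|L_A\|_\chi$ is automatic. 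The main technical point—and the reason part (b) needs Lemma \ref{lem4} in full strength—is the identification of $\lim_n (L_Ax)_n$ with the series $\sum_k\tilde\alpha_k y_k$, without which one cannot isolate the term $\hat\ell$ needed to evaluate $(I-P_l)$ on $c$.
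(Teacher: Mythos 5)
Your proposal is correct and follows essentially the same route as the paper: Lemma \ref{lem8} to reduce to $\chi(L_A(S))$, Lemmas \ref{lem9} and \ref{lem10} (respectively the truncation decomposition with subadditivity of $\chi$ for $l_\infty$) to evaluate or bound it, and Lemmas \ref{lem12}--\ref{lem14} together with Lemma \ref{lem4} to convert $\sup_{y\in S_X}|\tilde A_n y|$ into the $\ell_1$-norms of the rows of $\tilde A$. No substantive differences or gaps.
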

\begin{proof}
$(a)$ Let us first observe that the expressions in (\ref{eq2}) and in (\ref{eq4}) exist by Lemma \ref{lem14}. Also by using the Lemma \ref{lem13} \&
\ref{lem4}, we can deduce that the expressions in (\ref{eq3}) exists.\\
We write $S= S_{X(r, s, t; \Delta^{(m)})}$ in short. Then by Lemma \ref{lem8}, we have $ \|L_A \|_{\chi} = \chi(AS)$. Since $X(r, s, t; \Delta^{(m)})$ and $c_0$ are BK spaces,
$A $ induces a continuous map $L_{A}$ from $X(r, s, t; \Delta^{(m)})$ to $c_0$ by Lemma \ref{lem6}. Thus $AS$ is bounded in $c_0$,
i.e., $AS \in \mathcal{M}_{c_0}$. Now by Lemma \ref{lem9},
$$ \chi(AS) = \displaystyle \lim_{l \rightarrow \infty}\Big( \sup_{x \in S}\|(I -P_l)(Ax)\|_{\infty} \Big), $$
where the projection $P_l : c_0 \rightarrow c_0$ is defined by $ P_l(x) = (x_0, x_1, \cdots, x_l, 0, 0, \cdots)$ for all $x =(x_k) \in c_0$ and
$l \in \mathbb{N}_{0}$. Therefore $\|(I -P_l)(Ax)\|_{\infty} = \displaystyle \sup_{n > l}|A_n(x)|$ for all $ x \in X(r, s, t; \Delta^{(m)})$.
Using (\ref{eq0}) and Lemma \ref{lem12}, we have
\begin{align*}
\displaystyle \sup_{x \in S}\|(I -P_l)(Ax)\|_{\infty} & =  \sup_{ n> l}\|A_n \|_{X(r, s, t; \Delta^{(m)})}^{*}\\
& = \sup_{ n> l}\|\tilde{A_n }\|_{l_1}
\end{align*}
Therefore $\chi(AS) = \displaystyle \lim_{l \rightarrow \infty}\Big( \sup_{ n> l}\|\tilde{A_n }\|_{l_1} \Big) = \displaystyle \limsup_{n \rightarrow \infty} \|\tilde{A_n }\|_{l_1} = \displaystyle \limsup_{n \rightarrow \infty} \sum_{k=0}^{\infty}|\tilde{a}_{nk} | $.
This completes the proof.

(b) We have $AS \in \mathcal{M}_{c}$. Let $P_l : c \rightarrow c$ be the projection from $c$ onto
the span of $\{e, e_0, e_1, \cdots, e_l \}$ defined as
$$ P_l(z) = \hat{\ell} e + \sum_{k=0}^{r} (z_k -\hat{\ell})e_k,$$
where $\hat{\ell} = \displaystyle\lim_{k \rightarrow \infty} z_k$.
Thus for every $l \in \mathbb{N}_{0}$, we have $$ (I -P_l)(z) = \sum_{k=l+1}^{\infty} (z_k - \hat{\ell})e_k.$$
Therefore
$ \|(I -P_l)(z)\|_{\infty} = \displaystyle \sup_{k > l}|z_k - \hat{\ell}| $ for all $z=(z_k) \in c$.
Applying Lemma \ref{lem10}, we have
\begin{equation}{\label{eq10}}
\frac{1}{2} \displaystyle\lim_{l\rightarrow\infty}\Big( \displaystyle\sup_{x\in S}\|(I-P_l)(Ax)\|_{\infty}\Big)\leq \| L_A\|_{\chi} \leq \displaystyle\lim_{l\rightarrow\infty}\Big( \displaystyle\sup_{x\in S}\|(I-P_l)(Ax)\|_{\infty}\Big).
\end{equation}
Since $A \in (X(r, s, t; \Delta^{(m)}), c),$ we have by Lemma \ref{lem13}, $\tilde{A }\in (X, c)$ and $Ax = \tilde{A }y$ for every $x \in X(r, s, t; \Delta^{(m)})$ and $y \in X$, which are connected by the relation $y =(A(r,s,t). \Delta^{(m)})x$.
Using Lemma \ref{lem4}, we have $\tilde{\alpha}_k = \displaystyle \lim_{n \rightarrow \infty}\tilde{a}_{nk}$ exists for all $k$,
$ \tilde{\alpha} = (\tilde{\alpha}_k ) \in X ^{\beta} = l_1$ and $\displaystyle \lim_{n \rightarrow \infty}\tilde{A}_{n}(y) = \sum_{k=0}^{\infty} \tilde{\alpha}_k y_k.$
Since $ \|(I -P_l)(z)\|_{\infty} = \displaystyle \sup_{k > l}|z_k - \hat{\ell}| $, we have
\begin{align*}
\|(I-P_l)(Ax)\|_{\infty} & = \|(I-P_l)(\tilde{A}y)\|_{\infty}\\
& = \sup_{ n> l}\Big | \tilde{A}_n(y) - \sum_{k=0}^{\infty} \tilde{\alpha}_k y_k \Big |\\
& =  \sup_{ n> l}\Big | \sum_{k=0}^{\infty}(\tilde{a}_{nk} -  \tilde{\alpha}_k) y_k \Big |.
\end{align*}
Also we know that $x \in S= S_{X(r, s, t; \Delta^{(m)})}$ if and only if $y \in S_X$. From (\ref{eq0}) and Lemma \ref{lem5}, we deduce
\begin{align*}
\sup_{x \in S} \|(I-P_l)(Ax)\|_{\infty} & =  \sup_{ n> l}\Big(\sup_{y \in S_X}\Big | \sum_{k=0}^{\infty}(\tilde{a}_{nk} -  \tilde{\alpha}_k) y_k \Big | \Big)\\
& = \sup_{ n> l}\| \tilde{A}_n -\tilde{\alpha }\|_{X}^{*} =  \sup_{ n> l}\| \tilde{A}_n -\tilde{\alpha }\|_{l_1}.
\end{align*}
Hence from (\ref{eq10}), we have
\begin{center}
 $\frac{1}{2} \displaystyle\limsup_{n\rightarrow\infty}\displaystyle\sum_{k=0}^{\infty}|\tilde{a}_{nk}-\tilde{\alpha}_k| \leq \displaystyle \|L_A\|_\chi \leq \displaystyle\limsup_{n\rightarrow\infty}\displaystyle\sum_{k=0}^{\infty}|\tilde{a}_{nk}-\tilde{\alpha}_k|$.
\end{center}

$(c)$ We first define a projection $P_l : l_{\infty} \rightarrow l_{\infty}$, as $ P_l(x) = (x_0, x_1, \cdots, x_l, 0, 0, \cdots)$ for all $x =(x_k) \in l_{\infty}$, $l \in \mathbb{N}_{0}$. We have
$$  AS \subset P_l(AS) + (I -P_l)(AS).$$
By the property of $\chi$, we have
\begin{align*}
 0 \leq \chi(AS)& \leq \chi(P_{l}(AS)) + \chi((I - P_{l})(AS))\\
 & = \chi((I - P_{l})(AS))\\
 & \leq \sup_{x \in S} \|(I-P_l)(Ax)\|_{\infty} \\
 &= \sup_{n> l} \| \tilde{A}_n\|_{l_1}.
 \end{align*}
 Hence
\begin{center}
 $0 \leq \chi(AS) \leq \displaystyle \limsup_{n \rightarrow \infty} \| \tilde{A}_n\|_{l_1} = \displaystyle\limsup_{n \rightarrow \infty}\displaystyle\sum_{k=0}^{\infty}|\tilde{a}_{nk}|.$
\end{center}
This completes the proof.
\end{proof}
\begin{cor}{\label{cor1}}
Let $X(r, s, t; \Delta^{(m)})$ be any sequence spaces for $X \in \{ c_0, l_{\infty}\}$. \\
$(a)$ If $A \in (X(r, s, t; \Delta^{(m)}), c_0)$, then $L_A$ is compact if and only if
$\displaystyle \lim_{n \rightarrow \infty}\sum_{k=0}^{\infty}|\tilde{a}_{nk}| =0$\\
$(b)$ If $A \in (X(r, s, t, \Delta^{(m)}), c)$ then
\begin{center}
$L_A$ is compact if and only if
$\displaystyle \lim_{n \rightarrow \infty} \sum_{k=0}^{\infty}|\tilde{a}_{nk} -\tilde{\alpha}_k | =0$,
where ${\tilde{\alpha}}_k = \displaystyle \lim_{n \rightarrow \infty}\tilde{a}_{nk}$ for all $k $.
\end{center}
$(c)$ If $A \in (X(r, s, t, \Delta^{(m)}), l_{\infty})$ then $L_A$ is compact if and only if
$\displaystyle \lim_{n \rightarrow \infty} \sum_{k=0}^{\infty}|\tilde{a}_{nk}| =0$.
\end{cor}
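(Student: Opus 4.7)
The strategy is to read each equivalence directly off Theorem \ref{thm1}, invoking Lemma \ref{lem8}, which asserts that the bounded operator $L_A$ is compact if and only if $\|L_A\|_{\chi}=0$. Each part of the corollary is therefore reduced to identifying when the Hausdorff measure $\|L_A\|_{\chi}$ vanishes, and in all three cases this translation should be routine once Theorem \ref{thm1} is in hand.

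For part (a), Theorem \ref{thm1}(a) gives the exact equality
$$\|L_A\|_{\chi} = \limsup_{n \to \infty} \sum_{k=0}^{\infty} |\tilde{a}_{nk}|,$$
so $\|L_A\|_{\chi}=0$ is equivalent to the vanishing of this $\limsup$, which for a sequence of non-negative real numbers coincides with the stated limit being zero; Lemma \ref{lem8} then yields the claim. For part (b), the two-sided estimate in Theorem \ref{thm1}(b) sandwiches $\|L_A\|_{\chi}$ between $\tfrac{1}{2}\limsup_{n}\sum_{k}|\tilde{a}_{nk}-\tilde{\alpha}_k|$ and $\limsup_{n}\sum_{k}|\tilde{a}_{nk}-\tilde{\alpha}_k|$, so $\|L_A\|_{\chi}$ vanishes precisely when this $\limsup$ does, and the same non-negativity argument converts this into the stated limit condition; Lemma \ref{lem8} closes the argument.

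For part (c), the sufficient direction is immediate: by the upper bound in Theorem \ref{thm1}(c), the condition $\lim_{n}\sum_{k}|\tilde{a}_{nk}|=0$ forces $\|L_A\|_{\chi}=0$, so $L_A$ is compact by Lemma \ref{lem8}. The anticipated main obstacle is the necessary direction, since Theorem \ref{thm1}(c) furnishes only an upper estimate for $\|L_A\|_{\chi}$ and no matching lower bound. To attempt this, I would pass through Lemma \ref{lem13} to identify $L_A$ on $X(r,s,t;\Delta^{(m)})$ with the associated matrix operator $L_{\tilde{A}} : X \to l_{\infty}$, and then try to exploit the relative compactness of $\tilde{A}(S_X)$ in $l_{\infty}$ together with Lemma \ref{lem14} to deduce that the row norms $\|\tilde{A}_n\|_{l_1}=\sum_{k}|\tilde{a}_{nk}|$ must tend to zero. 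This is where I expect the proof to be most delicate, because generic compact matrix operators into $l_{\infty}$ need not have row sums tending to zero (rank-one operators with constant rows already show this), so the argument must rely on the specific structure of $\tilde{A}$ inherited from the triangle $A(r,s,t).\Delta^{(m)}$ rather than on compactness alone.
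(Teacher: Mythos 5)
Your treatment of parts (a) and (b) is correct and is exactly the paper's argument: the paper's entire proof is the sentence ``immediate from Theorem \ref{thm1}'', meaning precisely the combination of Lemma \ref{lem8} with the identity in \eqref{eq2} for (a) and the two-sided estimate in \eqref{eq3} for (b), together with the observation that a $\limsup$ of nonnegative terms vanishes if and only if the limit exists and equals zero. Nothing more is needed there.

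Your suspicion about part (c) is justified, and you should not try to close that gap, because the ``only if'' direction is false as stated. Theorem \ref{thm1}(c) supplies only the upper bound $0\leq\|L_A\|_{\chi}\leq\limsup_{n}\sum_{k}|\tilde{a}_{nk}|$, so only the sufficiency follows, and no appeal to the special structure of $\tilde{A}$ can rescue the necessity: since $T=A(r,s,t).\Delta^{(m)}$ is an invertible triangle, \emph{every} matrix $\tilde{A}\in(X,l_{\infty})$ arises from some $A=\tilde{A}T$ in $(X(r,s,t;\Delta^{(m)}),l_{\infty})$. Taking every row of $\tilde{A}$ equal to $e_{0}$ gives a rank-one (hence compact) operator $L_A$ with $\sum_{k}|\tilde{a}_{nk}|=1$ for all $n$, exactly the counterexample you sketched. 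The paper's ``immediate from Theorem \ref{thm1}'' silently overstates (c); the correct statement, as in the sources the paper follows (e.g.\ Mursaleen and Noman \cite{MUR3}), is that $L_A$ is compact \emph{if} $\lim_{n\rightarrow\infty}\sum_{k=0}^{\infty}|\tilde{a}_{nk}|=0$, with no converse. Your proposal proves everything that is actually provable and correctly diagnoses the defect in the claim.
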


\begin{proof}
 The proof is immediate from the Theorem \ref{thm1}.
\end{proof}
\begin{cor}
For every matrix $A \in (l_{\infty}(r, s, t; \Delta^{(m)}), c_0)$ or $A \in (l_{\infty}(r, s, t; \Delta^{(m)}), c)$ the operator $L_{A}$, induces by matrix $A$ is compact.
\end{cor}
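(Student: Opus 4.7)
The plan is to combine the associated-matrix reduction of Lemma~\ref{lem13} with the Stieglitz–Tietz characterizations of $(l_\infty, c_0)$ and $(l_\infty, c)$ from Theorem~4.4, and then feed the result into Corollary~\ref{cor1}. The whole point is that the condition needed for compactness in Corollary~\ref{cor1}, namely $\lim_{n}\sum_{k}|\tilde{a}_{nk}|=0$ or $\lim_{n}\sum_{k}|\tilde{a}_{nk}-\tilde{\alpha}_k|=0$, is automatically built into being a matrix transformation out of $l_\infty$.

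First I would handle $A\in (l_\infty(r,s,t;\Delta^{(m)}), c_0)$. By Lemma~\ref{lem13}, the associated matrix $\tilde A=(\tilde a_{nk})$ defined by \eqref{eq111} belongs to $(l_\infty, c_0)$. By Theorem~4.4(d), membership in $(l_\infty, c_0)$ is equivalent to condition $(4.6)$, i.e.\ $\lim_{n}\sum_{k=0}^{\infty}|\tilde a_{nk}|=0$. This is precisely the hypothesis of Corollary~\ref{cor1}(a), so $L_A$ is compact.

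Next I would handle $A\in (l_\infty(r,s,t;\Delta^{(m)}), c)$. Again by Lemma~\ref{lem13}, $\tilde A\in (l_\infty, c)$, and Theorem~4.4(g) tells us that all three conditions $(4.5)$, $(4.9)$, $(4.10)$ hold for $\tilde A$; in particular $\tilde\alpha_k=\lim_{n}\tilde a_{nk}$ exists for every $k$ and
\begin{equation*}
\lim_{n\rightarrow\infty}\sum_{k=0}^{\infty}|\tilde a_{nk}-\tilde\alpha_k|=0.
\end{equation*}
This is exactly the criterion in Corollary~\ref{cor1}(b), hence $L_A$ is compact.

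There is no real obstacle here; the only thing to be careful about is lining up notation. Specifically, one must check that the $\tilde\alpha_k$ produced by Lemma~\ref{lem4} applied to $\tilde A$ is the same $\tilde\alpha_k$ that appears in Corollary~\ref{cor1}(b); both are defined as $\lim_{n}\tilde a_{nk}$, so they coincide. Thus the corollary follows as an immediate consequence of Lemma~\ref{lem13}, Theorem~4.4(d),(g), and Corollary~\ref{cor1}(a),(b), and no further estimate on $\|L_A\|_\chi$ is needed beyond what those results already provide.
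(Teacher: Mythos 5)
Your proposal is correct and follows essentially the same route as the paper's own proof: reduce to the associated matrix $\tilde{A}$ via Lemma \ref{lem13}, invoke Theorem 4.4(d) and 4.4(g) to get $\lim_{n}\sum_{k}|\tilde{a}_{nk}|=0$ (resp. $\lim_{n}\sum_{k}|\tilde{a}_{nk}-\tilde{\alpha}_k|=0$), and conclude by Corollary \ref{cor1}(a),(b). No gaps.
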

\begin{proof}
Let $A \in (l_{\infty}(r, s, t; \Delta^{(m)}), c_0)$ then $\tilde{A }\in (l_{\infty}, c_0)$, where $Ax = \tilde{A}y$ holds for every
$x \in l_{\infty}(r, s, t; \Delta^{(m)})$ and $y \in l_{\infty}$, which are connected by the relation $y =(A(r,s,t). \Delta^{(m)})x$.
Since $\tilde{A }\in (l_{\infty}, c_0)$, by Theorem 4.4(d), we have $ \displaystyle\lim_{n \rightarrow \infty}\sum_{k=0}^{\infty}|\tilde{a}_{nk}| =0$. Hence by Corollary \ref{cor1}(a) the operator $L_A$ is compact.\\
 Similarly if $A \in (l_{\infty}(r, s, t; \Delta^{(m)}), c)$ then $\tilde{A }\in (l_{\infty}, c)$. From Theorem 4.4(g), we have $ \displaystyle\lim_{n \rightarrow \infty}\sum_{k=0}^{\infty}|\tilde{a}_{nk} - \tilde{\alpha}_k|  =0$, where
$\tilde{\alpha}_k = \displaystyle \lim_{n \rightarrow \infty}\tilde{a}_{nk}$ for all $k$. Thus by Corollary \ref{cor1}(b), we have $L_A$ is compact.
\end{proof}

\end{document}